\newcommand{\N}{\mathbb{N}}
\newcommand{\erre}{\mathbb{R}}
\newcommand{\sub}{\subseteq}
\def\epsilon{\varepsilon}
\newcommand{\diam}{\mathop{\mathrm{diam}}\nolimits}
\newcommand{\pten}{\ensuremath{\widehat{\otimes}_\pi}}
\DeclareMathOperator{\conv}{conv}
\newtheorem{theorem}{Theorem}[section]
\newtheorem{proposition}[theorem]{Proposition}
\newtheorem{corollary}[theorem]{Corollary}
\newtheorem{lemma}[theorem]{Lemma}
\theoremstyle{definition}
\newtheorem{remark}[theorem]{Remark}
\numberwithin{equation}{section}
\title{On weakly almost square Banach spaces}
\author[J. Rodr\'iguez]{Jos\'e Rodr\'{i}guez}
\address[J. Rodr\'iguez]{Departamento de Ingenier\'{i}a y Tecnolog\'{i}a de Computadores,
Facultad de Inform\'{a}tica, Universidad de Murcia, 30100 Espinardo (Murcia), Spain.
	\newline
	\href{https://orcid.org/0000-0001-5316-8016}{ORCID: \texttt{0000-0001-5316-8016} }}
\email{joserr@um.es}
\urladdr{\url{https://webs.um.es/joserr}}
\author[A. Rueda Zoca]{Abraham Rueda Zoca}
\address[A. Rueda Zoca]{Departamento de An\'{a}lisis Matem\'{a}tico, Facultad de Ciencias, Universidad de Granada, 18071 Granada, Spain.
	\newline
	\href{https://orcid.org/0000-0003-0718-1353}{ORCID: \texttt{0000-0003-0718-1353} }}
\email{abrahamrueda@ugr.es}
\urladdr{\url{https://arzenglish.wordpress.com}}
\keywords{Almost squareness; slice; weakly open set; Banach function space}
\subjclass[2020]{46B04, 46B20, 46E30}
\thanks{The research was supported by grants PID2021-122126NB-C32 (J. Rodr\'{i}guez) and 
PID2021-122126NB-C31 (A. Rueda Zoca), funded by MCIN/AEI/10.13039/501100011033 and ``ERDF A way of making Europe'', 
and also by grant 21955/PI/22 (funded by Fundaci\'on S\'eneca - ACyT Regi\'{o}n de Murcia). 
The research of A. Rueda Zoca was also supported by grants FQM-0185 and PY20\_00255 (funded by Junta de Andaluc\'ia).}
\begin{document}

\begin{abstract}
We prove some results on weakly almost square Banach spaces and their relatives. 
On the one hand, we discuss weak almost squareness in the setting of Banach function spaces. More precisely, 
let $(\Omega,\Sigma)$ be a measurable space, let $E$ be a Banach lattice and let $\nu:\Sigma \to E^+$ be a non-atomic countably additive measure
having relatively norm compact range. Then the space $L_1(\nu)$ is weakly almost square. This result
applies to some abstract Ces\`{a}ro function spaces. Similar arguments show that
the Lebesgue-Bochner space $L_1(\mu,Y)$ is weakly almost square for any Banach space~$Y$ and for any non-atomic finite measure~$\mu$. 
On the other hand, we make some progress on the open question of whether there exists a locally almost square Banach space which fails the diameter two property. 
In this line we prove that if $X$ is any Banach space containing a complemented isomorphic copy of~$c_0$, then for every $0<\varepsilon<1$ there exists an equivalent norm 
$|\cdot|$ on~$X$ satisfying: (i)~every slice of the unit ball $B_{(X,|\cdot|)}$ has diameter~$2$; (ii) $B_{(X,|\cdot|)}$ contains non-empty relatively weakly open subsets of arbitrarily small diameter; and (iii)~$(X,|\cdot|)$ is $(r,s)$-SQ for all $0<r,s < \frac{1-\varepsilon}{1+\varepsilon}$.
\end{abstract}

\maketitle

\section{Introduction}

Let $(X,\|\cdot\|)$ be a Banach space. 
The (closed) unit ball and the unit sphere of~$X$ are denoted by $B_{(X,\|\cdot\|)}$ and $S_{(X,\|\cdot\|)}$, respectively. If the norm does not 
need to be explicitly mentioned, we just write $B_X$ and~$S_X$ instead.
Given a bounded set $C \sub X$, 
a \textit{slice} of $C$ is a set of the form
$$
	S(C,x^*,\alpha):=\{x\in C: \, x^*(x)>\sup x^*(C)-\alpha\}
$$
for some $x^*\in X^*$ (the topological dual of $X$) and $\alpha>0$. Notice that $S(C,x^*,\alpha)$ is non-empty and relatively weakly open in~$C$. 
A Banach space is said to have the \textit{slice diameter two property (slice-D2P)} (respectively, \textit{diameter two property -- D2P, strong diameter two property -- SD2P}) if every slice (respectively, non-empty relatively weakly open subset, convex combination of slices) of the unit ball has diameter~$2$. 
Diameter two properties have attracted the attention of many researchers in the last 20 years (see, e.g., \cite{abr-alt-2,abr-lim-nyg,bec-lop-rue:15a,bec-lop-rue:15b,hal-lan-pol}) and 
have motivated the appearance of new properties of Banach spaces (almost squareness \cite{abr-lan-lim}, symmetric strong diameter two properties \cite{hal-alt, lan-rue} or diametral diameter two properties \cite{bec-lop-rue:18}). 

According to Abrahamsen, Langemets and Lima~\cite{abr-lan-lim}, a Banach space $(X,\|\cdot\|)$ is said to be 
\begin{enumerate}
\item[(i)] \textit{locally almost square (LASQ)} if for every $x\in S_{(X,\|\cdot\|)}$ there exists a sequence 
$(y_n)_{n\in \N}$ in~$B_{(X,\|\cdot\|)}$ such that $\Vert x\pm y_n\Vert\rightarrow 1$ and $\Vert y_n\Vert\rightarrow 1$;
\item[(ii)] \textit{weakly almost square (WASQ)} if for every $x\in S_{(X,\|\cdot\|)}$ there exists a weakly null sequence 
$(y_n)_{n\in \N}$ in~$B_{(X,\|\cdot\|)}$ such that $\Vert x\pm y_n\Vert\rightarrow 1$ and 
$\Vert y_n\Vert\rightarrow 1$;
\item[(iii)] \textit{almost square (ASQ)} if for every finite set $\{x_1,\ldots, x_k\} \sub S_{(X,\|\cdot\|)}$ there exists a sequence 
$(y_n)_{n\in \N}$ in~$B_{(X,\|\cdot\|)}$ 
such that $\Vert x_i\pm y_n\Vert\rightarrow 1$ for every $i\in\{1,\ldots, k\}$ and $\Vert y_n\Vert\rightarrow 1$.
\end{enumerate} 
All these properties are isometric in nature, i.e., they depend on the norm considered. For instance, the basic example of an ASQ space is $c_0$ with its usual norm, while
every Banach space admits an equivalent norm failing the slice-D2P and so it cannot be LASQ (see, e.g., \cite[Lemma~2.1]{bec-lop-rue:16b}).

Even though the previous properties were introduced in \cite{abr-lan-lim}, 
LASQ and WASQ spaces were implicitly used by Kubiak~\cite{kub} to study the D2P in some Ces\`aro function spaces.
Apart from being interesting by themselves, almost squareness properties have shown to be a powerful tool in order to study 
diameter two properties in certain Banach spaces where there is not a good description of the dual space. In this direction let us mention, for instance, that 
in \cite[Section~4]{har-2} it is proved that if $X$ is LASQ (resp., ASQ) then any ultrapower $X_\mathcal U$ of~$X$ is LASQ (resp., ASQ) and, in particular, $X_\mathcal U$ has the slice-D2P (resp., SD2P). Observe that it is unclear whether $X_\mathcal U$ has the slice-D2P (resp., SD2P) if $X$ has the slice-D2P (resp., SD2P). 
Another context in which these properties are useful are the projective symmetric tensor products. It is known that if $X$ is WASQ and has the Dunford-Pettis property (resp., $X$ is ASQ) then all the projective symmetric tensor products $\widehat{\otimes}_{\pi, s, N}X$ have the slice-D2P \cite[Proposition 3.6]{lan-lim-rue} (resp., SD2P \cite[Theorem 3.3]{bec-lop-rue:16}). 
Notice that it is unknown whether any of the diameter two properties is stable by taking projective symmetric tensor products. 

Among all the almost squareness properties introduced in~\cite{abr-lan-lim}, it is clear that ASQ has been studied in a more intensive way 
because it turns out to characterise the containment of~$c_0$. More precisely, a Banach space admits an ASQ equivalent renorming if, and only if, it contains an isomorphic 
copy of~$c_0$ (see \cite[Lemma~2.6]{abr-lan-lim} and \cite[Theorem~2.3]{bec-lop-rue:16}). 
The contribution to examples of LASQ and WASQ spaces has been more modest. In spite of that, we find several results in the literature about these properties in the context of function spaces. On the one hand, Kubiak proved in \cite[Lemma~3.3]{kub} that the weighted Ces\`aro function spaces on an interval are WASQ. 
In particular, $L_1[0,1]$ is WASQ. On the other hand, Hardtke proved in \cite[Theorem~3.1]{har-3} that the K\"othe-Bochner space $E(X)$ is LASQ whenever the Banach space $X$ is LASQ, for any Banach function space~$E$.

The aim of this note is to deepen the understanding of WASQ and LASQ Banach spaces. The paper is organized as follows.

In Section~\ref{section:functionspaces} we focus on certain Banach function spaces which play an important role in Banach lattice and operator theory. Namely,
we consider the space $L_1(\nu)$ of all real-valued functions that are integrable with respect to a countably additive vector measure $\nu$ (defined on a $\sigma$-algebra and
taking values in a Banach space). Up to Banach lattice isometries, these spaces represent all order continuous Banach lattices having a weak order unit (see, e.g., \cite[Theorem~8]{cur1}). 
Therefore, there are reflexive (hence, having the Radon-Nikod\'{y}m property and so failing the slice-D2P) Banach lattices within this class, like $\ell_p$ and $L_p[0,1]$ for $1<p<\infty$. For detailed information on the $L_1$ space of a vector measure, see~\cite{oka-alt}. More recent references on this topic
are \cite{cal-alt-6,cur-ric-6,cur-ric-5,nyg-rod,rod16}. Our main result in this section is the following:

\begin{theorem}\label{theorem:main}
Let $(\Omega,\Sigma)$ be a measurable space, let $E$ be a Banach lattice and let $\nu:\Sigma \to E$ be a countably additive measure. If $\nu$ is non-atomic
and the set 
$$
	\mathcal{R}(\nu):=\{\nu(A): \, A\in \Sigma\}
$$ 
(the range of~$\nu$) is a relatively norm compact subset of~$E^+:=\{x\in E: x\geq 0\}$, then $L_1(\nu)$ is WASQ. 
\end{theorem}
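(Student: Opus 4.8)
The plan is to realise the defining sequence of WASQ for a given $f\in S_{L_1(\nu)}$ as $y_n:=r_nf$, where $(r_n)$ is a sequence of Rademacher functions with respect to a Rybakov control measure $\lambda:=|x_0^*\nu|$ for~$\nu$ (normalised to be a probability measure). Recall the structural facts I would invoke: $L_1(\nu)$ is an order continuous Banach function space over~$\lambda$; $\lambda$ is non-atomic because $\nu$ is, so the Rademacher system exists; and the norm $\|h\|=\sup_{x^*\in B_{E^*}}\int_\Omega|h|\,d|x^*\nu|$ depends only on~$|h|$. Since $|r_nf|=|f|$, we get $y_n\in B_{L_1(\nu)}$ with $\|y_n\|=\|f\|=1$ \emph{for free}, so the requirement $\|y_n\|\to 1$ needs no work. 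Writing $f\pm y_n=(1\pm r_n)f$ and using that the norm is a supremum of integrals, the two conditions $\|f\pm y_n\|\to 1$ reduce (since $\sup_{x^*}\int_\Omega|f|\,d|x^*\nu|=1$) to the single \emph{uniform halving estimate} $\sup_{x^*\in B_{E^*}}\bigl|\int_\Omega r_n|f|\,d|x^*\nu|\bigr|\to 0$. Thus everything splits into (a) weak nullity of $(y_n)$ and (b) this uniform estimate.

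For (a) I would use that, by order continuity, $L_1(\nu)^*$ is the K\"othe dual of $L_1(\nu)$, so each functional has the form $h\mapsto\int_\Omega hg\,d\lambda$ with $fg\in L_1(\lambda)$. Then $\int_\Omega y_ng\,d\lambda=\int_\Omega r_n(fg)\,d\lambda\to 0$, because Rademacher functions satisfy $\int_\Omega r_n\psi\,d\lambda\to 0$ for every $\psi\in L_1(\lambda)$ (this holds for $\psi\in L_2(\lambda)$ by orthogonality and extends to $L_1(\lambda)$ by density, using $\|r_n\|_\infty=1$). Hence $y_n\to 0$ weakly.

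Step (b) is the heart of the matter and the only place where the relatively compact range of~$\nu$ is essential. First I would observe that the indefinite integral $|f|\nu\colon A\mapsto\int_A|f|\,d\nu$ again has relatively norm compact range: approximating $|f|$ by simple functions in $L_1(\nu)$ and using that the integration operator has norm $\le 1$, the (clearly relatively compact) ranges of the approximating measures converge to $\mathcal{R}(|f|\nu)$ uniformly in the Hausdorff sense, and a uniform limit of relatively compact sets is relatively compact. The crucial lemma is then that a measure with relatively norm compact range has its family of variation measures $K:=\{\,|f|\,d|x^*\nu|:x^*\in B_{E^*}\,\}$ relatively norm compact in the total variation norm. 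I expect to prove this via Krein's theorem: writing the density operator $S\colon E^*\to L_1(\lambda)$, $Sx^*=d\bigl(x^*(|f|\nu)\bigr)/d\lambda$, its adjoint sends the $\pm 1$-valued extreme points $\mathbf{1}_A-\mathbf{1}_{A^c}$ of $B_{L_\infty(\lambda)}$ to $2(|f|\nu)(A)-(|f|\nu)(\Omega)$, a relatively norm compact set; by Mazur the norm-closed convex hull of this set is norm compact, hence weak$^*$-closed, hence contains the weak$^*$-closed convex hull, i.e. $S^*(B_{L_\infty(\lambda)})$, so $S^*$ and therefore $S$ is compact, giving norm compactness of~$K$. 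Granting this, (b) follows by a finite-net argument: given $\epsilon>0$, take a finite $\epsilon$-net $\mu_1,\dots,\mu_m$ of~$K$; for large $n$ one has $|\int_\Omega r_n\,d\mu_i|<\epsilon$ for all $i$ by the scalar fact above, and for arbitrary $\mu\in K$ the nearest $\mu_i$ yields $|\int_\Omega r_n\,d\mu|\le|\int_\Omega r_n\,d\mu_i|+\|\mu-\mu_i\|_{\mathrm{var}}<2\epsilon$, since $\|r_n\|_\infty=1$. The main obstacle is exactly this relative \emph{norm} compactness of~$K$: mere relative weak compactness (equivalently, uniform integrability, which already follows from order continuity of $L_1(\nu)$) is insufficient, because uniform convergence of the weak$^*$-null multipliers $r_n$ against a family can fail on weakly compact families while holding on norm compact ones.
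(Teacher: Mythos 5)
Your proposal is correct, and although it starts from the same seed as the paper's proof --- take $y_n:=fr_n$ for a Rademacher-type sequence $(r_n)$ with respect to a Rybakov control measure, and obtain weak nullity from the K\"othe-dual description of $L_1(\nu)^*$ --- it diverges genuinely at the key step $\|f\pm fr_n\|_{L_1(\nu)}\to 1$. The paper exploits the hypothesis $\mathcal{R}(\nu)\sub E^+$ through the identity \eqref{equation:positive-norm}, notes that $|f|(1\pm r_n)\to|f|$ weakly inside the order interval $[0,2|f|]$, and upgrades the weak convergence of $I_\nu(|f|(1\pm r_n))$ to norm convergence by quoting that $I_\nu$ maps bounded uniformly integrable sets to relatively norm compact sets when the range is relatively norm compact (\cite[Proposition~3.56]{oka-alt}). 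You instead stay with the defining supremum formula for the norm, reduce both conditions to the uniform Riemann--Lebesgue estimate $\sup_{x^*\in B_{E^*}}\bigl|\int_\Omega r_n|f|\,d|x^*\circ\nu|\bigr|\to 0$, and derive it from relative norm compactness (in total variation) of the family $\{|x^*\circ(|f|\nu)|:\,x^*\in B_{E^*}\}$, which you prove from scratch via the Krein--Milman/Mazur/Schauder argument showing that a measure with relatively norm compact range has a compact integration operator on $L_\infty$; the finite-net step then closes the argument. Each route buys something: the paper's is shorter because the compactness transfer is cited, while yours is self-contained at that point and --- more interestingly --- never uses $\mathcal{R}(\nu)\sub E^+$ (the norm depends only on $|h|$, and compactness of the range of $|f|\nu$ needs no positivity), so it would establish the conclusion without the positivity hypothesis, which the paper's identity \eqref{equation:positive-norm} genuinely requires. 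Two harmless imprecisions: the norm-closed convex hull of $S^*(\mathrm{ext}\,B_{L_\infty(\lambda)})$ only needs to \emph{contain} $S^*(B_{L_\infty(\lambda)})$, not coincide with the weak$^*$-closed convex hull; and in the net argument the elements of (the closure of) $K$ should be kept in the form $\psi\lambda$ with $\psi\in L_1(\lambda)$, which is automatic since $K$ is the image of a relatively compact subset of $L_1(\lambda)$. Neither affects correctness.
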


Clearly, Theorem~\ref{theorem:main} generalizes the fact that the classical space $L_1(\mu)$ of a non-atomic finite measure~$\mu$ is WASQ.
As an application of Theorem~\ref{theorem:main} and some results of Curbera and Ricker~\cite{cur-ric-4}, it follows that if $E$ is an order continuous 
rearrangement invariant Banach function space on~$[0,1]$, 
then the abstract Ces\`aro function space $[\mathcal C,E]$ is WASQ (Corollary \ref{corollary:Cesaro}).  This generalizes 
the aforementioned result by Kubiak in the case of the interval~$[0,1]$. Abstract Ces\`aro function spaces
have been widely studied in the literature (see, e.g., \cite{ast-les-mal,ast-mal,cur-ric-4}).

The techniques of Theorem~\ref{theorem:main} allow us to show the Lebesgue-Bochner space
$L_1(\mu,Y)$ is WASQ for any Banach space~$Y$ whenever $\mu$ is a non-atomic finite measure (Corollary~\ref{corollary:Lebesgue-Bochner}).
This result should be compared with the above mentioned result of~\cite{har-3} that the property of being LASQ passes from a Banach space~$Y$ to the K\"othe-Bochner space
$E(Y)$, for any Banach function space~$E$. We finish Section~\ref{section:functionspaces} with an example of a WASQ Banach space of the form $L_1(\nu)$
as in Theorem~\ref{theorem:main} which is not an $\mathcal L_1$-space (Subsection~\ref{subsection:nakano}).

In Section~\ref{section:renorming} we go a bit further in the analysis of the link between almost squareness and diameter two properties.
One of the main questions raised in~\cite{abr-lan-lim} is whether there exists a LASQ Banach space which is not WASQ. Very recently, Kaasik and Veeorg proved 
in~\cite[Section~2]{kaa-vee} that the answer is negative and that an example can be found in the class of Lipschitz-free spaces over complete metric spaces. For such spaces,
the properties SD2P, D2P, slice-D2P and LASQ are equivalent (combine \cite[Theorem~1.5]{avi-mar-2} and 
\cite[Theorem~3.1]{hal-kaa-ost}), so the above mentioned example satisfies the SD2P. Since the slice-D2P and the D2P are different properties~\cite{bec-lop-rue:15a}, it is a natural question whether there exists a LASQ Banach space which fails the D2P.  Within the framework of Banach lattices, 
a stronger version of the LASQ property which implies the D2P has been considered in~\cite{cia}. Even though 
we do not know the answer to the previous question, we make some progress 
in this direction. Our main result in Section~\ref{section:renorming} is the following:
 
\begin{theorem}\label{theorem:countere}
Let $X$ be a Banach space containing a complemented isomorphic copy of~$c_0$. Then for any $0<\varepsilon<1$ there exists an equivalent norm $|\cdot|$ on~$X$ such that:
\begin{enumerate}
\item[(i)] $(X,|\cdot|)$ has the slice-D2P, that is, every slice of $B_{(X,|\cdot|)}$ has diameter~$2$.
\item[(ii)] There are non-empty relatively weakly open subsets of $B_{(X,|\cdot|)}$ of arbitrarily small diameter.
\item[(iii)] $(X,|\cdot|)$ is $(r,s)$-SQ for all $0<r,s < \frac{1-\varepsilon}{1+\varepsilon}$ in the sense of~\cite[Section~6]{avi-alt-7}, 
that is, for every finite set $\{x_1,\ldots, x_n\}\sub S_X$ there exists $y\in S_X$ satisfying
 \begin{equation*}
     |rx_i \pm sy|\le1\ \text{ for every }i\in \{1,\dots,n\}.
 \end{equation*}
\end{enumerate}
\end{theorem}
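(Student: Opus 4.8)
The plan is to produce the norm $|\cdot|$ by an explicit renorming adapted to the $c_0$-structure. Since $X$ contains a complemented isomorphic copy of~$c_0$, fix a decomposition $X=c_0\oplus Z$ and let $(e_n)_{n\in\N}$ denote the images of the $c_0$-basis; these form a normalized, weakly null sequence, and for every $f\in X^*$ the restriction of~$f$ to the $c_0$-summand lies in $\ell_1=c_0^*$, so $f(e_n)\to 0$. The norm $|\cdot|$ will be built so as to stay within a multiplicative window of size $1+\varepsilon$ of a fixed \emph{flat} $c_0$-type norm $\|\cdot\|_0$ on~$X$ (for instance the $\infty$-sum $\|(u,z)\|_0=\max\{\|u\|_\infty,\|z\|_Z\}$, which is ASQ): concretely $\|x\|_0\le|x|\le(1+\varepsilon)\|x\|_0$. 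The flat part is what forces big slices and the $(r,s)$-SQ property, via the far-out weakly null vectors~$e_m$; a carefully localized perturbation of the ball is what produces a small relatively weakly open set. The window size $1+\varepsilon$ is exactly what makes the admissible range in~(iii) equal to $\frac{1-\varepsilon}{1+\varepsilon}$.

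For~(iii), given a finite set $\{x_1,\dots,x_n\}\subset S_{(X,|\cdot|)}$ I would take $y=e_m$ for $m$ large. Because $e_m$ is far out in the $c_0$-support and the flat norm is of $c_0$-type, a direct computation gives $\|rx_i\pm s\,e_m\|_0\to\max\{r\|x_i\|_0,\,s\}\le\max\{r,s\}$ (up to an error tending to~$0$ in~$m$), while the contribution of the perturbation to $|rx_i\pm s\,e_m|$ is controlled by the same window. Feeding these estimates through $|x|\le(1+\varepsilon)\|x\|_0$ and $\|x_i\|_0\le 1$ yields $|rx_i\pm s\,e_m|\le 1$ for all~$i$ as soon as $r,s<\frac{1-\varepsilon}{1+\varepsilon}$; since the inequalities are strict there is slack, so a single sufficiently far $y=e_m$ works and no limiting procedure is needed. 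For~(i), given an arbitrary slice $S(B_{(X,|\cdot|)},f,\alpha)$ I would pick $x$ in it and use that $f(e_m)\to 0$, so that for large~$m$ the points $x\pm s_0 e_m$ still satisfy the slice inequality. The point is that at a \emph{generic} point of the ball the perturbation constraint is slack, leaving room to add $s_0 e_m$ with $s_0$ close to~$1$ without leaving $B_{(X,|\cdot|)}$; the two resulting points lie at distance $\approx 2s_0\to 2$, so the slice has diameter~$2$.

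For~(ii) I would exhibit a distinguished point $x_0\in S_{(X,|\cdot|)}$ together with finitely many functionals $f_1,\dots,f_k$ such that $W=\{x\in B_{(X,|\cdot|)}:\ f_i(x)>f_i(x_0)-\delta\ (i\le k)\}$ has arbitrarily small diameter for small~$\delta$. The perturbation is designed to be \emph{active} precisely at~$x_0$, so that no flat escape direction $e_m$ survives there: $x_0$ becomes a point of weak-to-norm continuity of the identity on $B_{(X,|\cdot|)}$, which is exactly what forbids the diameter-$2$ behaviour used in~(i) and hence breaks the D2P.

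The main obstacle is the coexistence of~(i) and~(ii). The point~$x_0$ must be a \emph{point of continuity that is not a denting point}: if a single functional pinned~$x_0$ to a small set, the corresponding slice of $B_{(X,|\cdot|)}$ would be small and~(i) would fail. Thus the perturbation has to pin~$x_0$ only through the combined action of several functionals, while every individual slice through~$x_0$ still reaches the flat region and retains diameter~$2$; in particular the naive device of intersecting the flat ball with a single strictly convex (LUR) cap at~$x_0$ is \emph{not} admissible, since it would make~$x_0$ strongly exposed. Engineering a perturbation that realizes this point-of-continuity-but-not-denting configuration — precisely the phenomenon separating the slice-D2P from the D2P — while keeping the whole construction inside the $1+\varepsilon$ window needed for the sharp constant in~(iii), is the crux; once the norm is fixed, the verifications in the previous two paragraphs are direct estimates.
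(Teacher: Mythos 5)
Your outline correctly identifies the global architecture that the paper also uses (split $X=Z\oplus W$ with $Z\simeq c_0$, renorm the $c_0$ summand, take an $\ell_\infty$-sum with a renorming of the complement having small slices, and keep the new ball inside a $(1+\varepsilon)$-window of an ASQ reference norm so that the admissible range in~(iii) is $\frac{1-\varepsilon}{1+\varepsilon}$). But the proposal has a genuine gap, and you name it yourself: the construction of the perturbation that makes a point of $S_{(X,|\cdot|)}$ a point of weak-to-norm continuity without making it denting is deferred as ``the crux'' and never carried out. That construction is the actual content of the theorem; everything in your first two paragraphs is conditional on it. The paper supplies it by importing the Argyros--Odell--Rosenthal set: on the Baire tree $\mathbb N^{<\omega}$ one takes the $\pm1$-valued vectors $x_\alpha\in c(\mathbb N^{<\omega})$, sets $A=\{x_\alpha\}$, $K=\overline{\conv}(A\cup-A)$, and defines the new unit ball as $\overline{\conv}\big((A\cup-A)\times\{0\}\cup((1-\varepsilon)B_Z+\varepsilon B_{c_0(\mathbb N^{<\omega})}\times\{0\})\big)$ on $Z=c(\mathbb N^{<\omega})\oplus_\infty c_0$. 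The small relatively weakly open sets are the sets $W_{n,\rho}$ of Lemma~\ref{lemma:smallopenweak}, cut out by the $n$ coordinate functionals $e_{\emptyset\smallfrown i}^*$ together with the $\lim$ functional; no single functional among these pins the set, which is exactly the non-denting point of continuity you postulate. Without this (or some equivalent explicit construction), (i) and (ii) are not proved.

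A secondary inaccuracy: even once a norm of this type is fixed, your verification of~(i) is not the ``direct estimate'' you describe. At an extreme point of the perturbed ball such as $(x_\alpha,0)$, the symmetric perturbation $x\pm s_0e_m$ with $s_0$ close to~$1$ need not stay in the ball, so the two-sided argument fails there. The paper instead exploits the tree structure: from $x_\alpha$ one moves \emph{one-sidedly} to $x_{\alpha\smallfrown n}=x_\alpha+2e_{\alpha\smallfrown n}$, which again lies in $A$ and is weakly close to $x_\alpha$, giving two points of the slice at distance exactly~$2$; the symmetric perturbation is only used in the case where the slice meets the Minkowski-sum part of the ball. Your sketch of~(iii) is essentially sound in spirit (the paper normalizes $(0,e_n)$, obtains $|z_i\pm z|\le\frac{1+\varepsilon}{1-\varepsilon}$, and invokes \cite[Lemma~6.3]{avi-alt-7} plus a density lemma), but it too cannot be checked until the norm exists.
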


Condition~(iii) measures somehow how far is the norm from being ASQ. Notice that a Banach space is ASQ if and only if it is
$(r,s)$-SQ for all $0<r,s<1$.

Theorem~\ref{theorem:countere} applies to any separable Banach space containing an isomorphic copy of~$c_0$, thanks to Sobczyk's theorem.
The proof of Theorem~\ref{theorem:countere} is inspired by the renorming technique developed by Becerra Guerrero, L\'{o}pez-Perez and Rueda Zoca 
in~\cite[Theorem 2.4]{bec-lop-rue:15a}, which in turn uses ideas of the 
example of Argyros, Odell and Rosenthal~\cite{arg-ode-ros} of a closed bounded convex subset of~$c_0$ having the convex point of continuity 
property but failing the point of continuity property.

\subsection*{Terminology}
We follow standard notation as can be found in~\cite{alb-kal,die-uhl-J}. We will consider real Banach spaces only. 
By an {\em operator} we mean a continuous linear map between Banach spaces.
By a {\em subspace} of a Banach space we mean a norm closed linear subspace.
Let $(X,\|\cdot\|)$ be a Banach space. Given a set $C \sub X$, we denote by
${\rm conv}(C)$ (resp., $\overline{{\rm conv}}(C)$) its convex hull (resp., closed convex hull).
The {\em diameter} of~$C$ is defined by ${\rm diam}_{\|\cdot\|}(C):=\sup\{\|x-x'\|:x,x'\in C\}$
and will be also denoted by ${\rm diam}(C)$ if no confusion arises.
 
Let $(\Omega,\Sigma,\mu)$ be a finite measure space. 
A Banach space~$(E,\|\cdot\|)$ is said to be a \emph{Banach function space} on $(\Omega,\Sigma,\mu)$ (or just over~$\mu$) if the following conditions hold: 
\begin{enumerate}
\item[(i)] $E$ is a (not necessarily closed) linear subspace of $L_1(\mu)$;
\item[(ii)] if $f\in L_0(\mu)$ and $|f| \leq |g|$ $\mu$-a.e. for some $g \in E$, then $f \in E$ and $\|f\| \leq \|g\|$;
\item[(iii)] the characteristic function $\chi_A$ of each $A \in \Sigma$ belongs to~$E$. 
\end{enumerate}
In this case, $E$ is a Banach lattice when endowed with the $\mu$-a.e. order and the inclusion map from $E$ to~$L_1(\mu)$ is an operator.
A set $H \sub E$ is called {\em uniformly $\mu$-integrable} if for each $\epsilon>0$
there is $\delta>0$ such that $\|f \chi_A\| \leq \epsilon$ for every $f\in H$ and for every $A\in \Sigma$ with $\mu(A)\leq \delta$. Suppose now that 
$E$ is order continuous. Then every bounded uniformly $\mu$-integrable subset of~$E$ is relatively weakly compact (see, e.g., \cite[Proposition~2.39]{oka-alt}), but the converse might fail, 
in constrast to the case of the classical $L_1$ space of a finite measure (for which the Dunford-Pettis theorem ensures the equivalence). Moreover, given $f,g\in E$
with $f\leq g$, the order interval $[f,g] \sub E$ is uniformly $\mu$-integrable (see, e.g., \cite[Lemma~2.37]{oka-alt}) and weakly compact.

\section{WASQ Banach function spaces}\label{section:functionspaces}

We begin this section with some preliminaries on the $L_1$ space of a vector measure (see \cite[Chapter~3]{oka-alt} for the basics on this topic).
Let $(\Omega,\Sigma)$ be a measurable space, let $X$ be a Banach space and let $\nu:\Sigma\to X$ be a countably additive measure. 
A set $A\in \Sigma$ is said to be $\nu$-null if 
$\nu(B)=0$ for every $B\in \Sigma$ with~$B\sub A$.
The family of all $\nu$-null sets is denoted by $\mathcal{N}(\nu)$. We say that a property holds {\em $\nu$-a.e.} if
it holds on some $A\in \Sigma$ such that $\Omega \setminus A\in \mathcal{N}(\nu)$.
We say that a set $A\in \Sigma \setminus \mathcal{N}(\nu)$ is an atom of~$\nu$ if 
for every $B \in \Sigma$ with $B \sub A$ we have either $B\in \mathcal{N}(\nu)$ or $A\setminus B\in \mathcal{N}(\nu)$.
We say that $\nu$ is {\em non-atomic} if it has no atoms. By a {\em Rybakov control measure} of~$\nu$
we mean a finite measure of the form $\mu=|x_0^*\circ \nu|$ (the variation of the signed measure $x_0^*\circ \nu:\Sigma\to \mathbb R$) for some $x_0^*\in X^*$ such that 
$\mathcal{N}(\mu) = \mathcal{N}(\nu)$ (see, e.g., \cite[p.~268, Theorem~2]{die-uhl-J} for a proof of the existence of Rybakov control measures).
 
A $\Sigma$-measurable function $f:\Omega \to \mathbb{R}$ is called {\em $\nu$-integrable} if $f\in L_1(|x^*\circ \nu|)$ for all $x^*\in X^*$ and,
for each $A\in \Sigma$, there is $\int_A f \, d\nu\in X$
such that
$$
	x^*\left ( \int_A f \, d\nu \right)=\int_A f\,d(x^*\circ \nu)  	\quad\text{for all $x^*\in X^*$}.
$$
Identifying functions which coincide $\nu$-a.e., the set $L_1(\nu)$ of all (equivalence classes of) $\nu$-integrable functions  
is a Banach lattice with the $\nu$-a.e. order and the norm
$$
	\|f\|_{L_1(\nu)}:=\sup_{x^*\in B_{X^*}}\int_\Omega |f|\,d|x^*\circ \nu|.
$$
$L_1(\nu)$ is an order continuous Banach function space over any Rybakov control measure of~$\nu$.
The (norm~$1$) operator $I_\nu: L_1(\nu)\to X$ defined by
$$
	I_\nu(f):=\int_\Omega f\, d\nu
	\quad\text{for all $f\in L_1(\nu)$}
$$ 
is called the {\em integration operator} of~$\nu$. 

To provide a proof of Theorem \ref{theorem:main} we need a couple of lemmata.
The first one belongs to the folklore (cf. \cite[Lemma~6.3.2]{alb-kal} for the case of the unit interval):

\begin{lemma}\label{lemma:Rademacher-weakly-null}
Let $(\Omega,\Sigma,\mu)$ be a non-atomic finite measure space. Then there is a sequence $(r_n)_{n\in \N}$ in~$L_\infty(\mu)$ such that:
\begin{enumerate}
\item[(i)] $|r_n|=1$ for all $n\in \N$; and 
\item[(ii)] for each $f\in L_1(\mu)$ the sequence $(fr_n)_{n\in \N}$ is weakly null in~$L_1(\mu)$. 
\end{enumerate}
\end{lemma}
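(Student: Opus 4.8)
The plan is to realize $(r_n)$ as a sequence of independent Rademacher-type functions adapted to~$\mu$, and then to read off~(ii) from the fact that such a sequence is orthonormal in~$L_2(\mu)$. Since the statement is invariant under rescaling the measure and is trivial when $\mu(\Omega)=0$ (as then $L_1(\mu)=\{0\}$), I may and do assume that $\mu$ is a probability measure.

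First I would carry out the construction. Using that $\mu$ is non-atomic, Sierpi\'nski's theorem guarantees that the range $\{\mu(B): B\in\Sigma,\ B\sub A\}$ equals the whole interval $[0,\mu(A)]$ for every $A\in\Sigma$; in particular every positive-measure set can be halved. Proceeding by dyadic induction I would build sets $A_n\in\Sigma$ as follows: take $A_1$ with $\mu(A_1)=\tfrac12$ and set $r_1:=\chi_{A_1}-\chi_{\Omega\setminus A_1}$; having defined $r_1,\dots,r_n$, whose generated finite algebra has atoms $P$, I would choose $A_{n+1}$ with $\mu(A_{n+1}\cap P)=\tfrac12\mu(P)$ for every such atom~$P$ and set $r_{n+1}:=\chi_{A_{n+1}}-\chi_{\Omega\setminus A_{n+1}}$. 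By construction each $r_n$ is $\pm1$-valued, so~(i) holds and $\|r_n\|_\infty=1$; moreover $r_{n+1}$ is independent of $\sigma(r_1,\dots,r_n)$ and has mean zero, so the $(r_n)$ form a sequence of independent, mean-zero, $\pm1$-valued functions.

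The key observation is then that independence together with $r_n^2=1$ gives $\int_\Omega r_m r_n\,d\mu=\delta_{mn}$, i.e. $(r_n)$ is orthonormal in~$L_2(\mu)$ (only this orthonormality, not full independence, will be used). By Bessel's inequality, $\sum_n |\int_\Omega h r_n\,d\mu|^2\le\|h\|_{L_2(\mu)}^2<\infty$ for every $h\in L_2(\mu)$, whence $\int_\Omega h r_n\,d\mu\to0$ for all $h\in L_2(\mu)$. To upgrade this to all of~$L_1(\mu)$ I would use that $\mu$ is finite, so $L_2(\mu)$ is a dense subspace of~$L_1(\mu)$, together with the uniform bound $\|r_n\|_\infty\le1$: given $h\in L_1(\mu)$ and $\epsilon>0$, choose $h'\in L_2(\mu)$ with $\|h-h'\|_{L_1(\mu)}<\epsilon$; then $|\int_\Omega h r_n\,d\mu|\le\|h-h'\|_{L_1(\mu)}+|\int_\Omega h' r_n\,d\mu|$, so $\limsup_n|\int_\Omega h r_n\,d\mu|\le\epsilon$. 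Hence $\int_\Omega h r_n\,d\mu\to0$ for every $h\in L_1(\mu)$; equivalently, $(r_n)$ is weak$^*$-null in $L_\infty(\mu)=L_1(\mu)^*$.

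Finally I would deduce~(ii). Fix $f\in L_1(\mu)$ and $g\in L_\infty(\mu)=L_1(\mu)^*$; since $fg\in L_1(\mu)$, the previous paragraph yields $\int_\Omega (f r_n) g\,d\mu=\int_\Omega (fg) r_n\,d\mu\to0$, which is exactly the assertion that $(f r_n)$ is weakly null in~$L_1(\mu)$. The point requiring the most care is precisely this passage to \emph{all} test functions: the $\sigma$-algebra generated by the $r_n$ may be far smaller than~$\Sigma$, so one cannot argue by density within the generated algebra. This obstacle is circumvented by passing through $L_2(\mu)$, where orthonormality forces weak nullity against the entire space via Bessel, and only afterwards extending to $L_1(\mu)$ using the $L_\infty$-bound and the finiteness of~$\mu$. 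The halving construction itself is routine folklore, so the genuine content lies in this two-step $L_2$-then-$L_1$ argument.
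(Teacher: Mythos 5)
Your proof is correct and complete; the paper itself gives no argument for this lemma, merely labelling it folklore and pointing to \cite[Lemma~6.3.2]{alb-kal} for the unit interval, and your dyadic-halving construction followed by the orthonormality/Bessel argument in $L_2(\mu)$ and the density-plus-$L_\infty$-bound upgrade to $L_1(\mu)$ is exactly that standard argument carried out in full for a general non-atomic finite measure.
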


A sequence as in the previous lemma will be called a {\em Rademacher-type sequence} on $(\Omega,\Sigma,\mu)$.

\begin{lemma}\label{lemma:Rademacher-weakly-null-Bfs}
Let $(\Omega,\Sigma)$ be a measurable space, let $X$ be a Banach space and let $\nu:\Sigma \to X$ be a non-atomic countably additive measure.
Let $\mu$ be a Rybakov control measure of~$\nu$ and let $(r_n)_{n\in \N}$ be a Rademacher-type sequence on $(\Omega,\Sigma,\mu)$.
Then for each $f\in L_1(\nu)$ the sequence $(fr_n)_{n\in \N}$ is weakly null in $L_1(\nu)$. 
\end{lemma}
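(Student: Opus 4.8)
The plan is to test weak nullity directly against the topological dual of~$L_1(\nu)$, exploiting that $L_1(\nu)$ is an order continuous Banach function space over the Rybakov control measure~$\mu$. For such a space the dual coincides isometrically with the K\"othe dual $L_1(\nu)'$ acting by integration against~$\mu$: every functional on~$L_1(\nu)$ is of the form $h\mapsto \int_\Omega hg\,d\mu$ for a unique $g\in L_1(\nu)'$, i.e.\ a $\mu$-measurable $g$ with $\int_\Omega|hg|\,d\mu<\infty$ for all $h\in L_1(\nu)$ (this is the only external input beyond Lemma~\ref{lemma:Rademacher-weakly-null}, and it is where order continuity of~$L_1(\nu)$ is used). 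I would recall this representation at the outset.

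First I would check that $(fr_n)_n$ is genuinely a sequence in~$L_1(\nu)$, so that the statement even makes sense: since $|r_n|=1$ $\mu$-a.e.\ we have $|fr_n|=|f|$, and the ideal property of the Banach function space~$L_1(\nu)$ then yields $fr_n\in L_1(\nu)$. Next, I would fix an arbitrary $g\in L_1(\nu)'=L_1(\nu)^*$. Because $f\in L_1(\nu)\subseteq L_1(\mu)$ and $g$ lies in the K\"othe dual, the product $h:=fg$ belongs to~$L_1(\mu)$. By Lemma~\ref{lemma:Rademacher-weakly-null}(ii) the sequence $(hr_n)_n$ is weakly null in~$L_1(\mu)$; pairing it with the constant function $\mathbf 1\in L_\infty(\mu)=L_1(\mu)^*$ (legitimate since $\mu$, being a Rybakov control measure, is finite) gives
$$
\int_\Omega fr_n\,g\,d\mu=\int_\Omega hr_n\,d\mu\longrightarrow 0 .
$$
The left-hand side is precisely the value of the functional~$g$ at $fr_n$, so $\langle fr_n,g\rangle\to 0$; as $g\in L_1(\nu)^*$ was arbitrary, $(fr_n)_n$ is weakly null in~$L_1(\nu)$.

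I do not expect any serious obstacle: the whole argument is a reduction of weak convergence in~$L_1(\nu)$ to weak convergence in the ambient space~$L_1(\mu)$ tested against the single functional~$\mathbf 1$. The only delicate point is the identification $L_1(\nu)^*=L_1(\nu)'$ with the integration pairing, so that testing against all of~$L_1(\nu)^*$ reduces to testing products $fg$ against~$\mathbf 1$; once that is in place the conclusion is immediate from Lemma~\ref{lemma:Rademacher-weakly-null}.
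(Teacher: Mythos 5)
Your proof is correct and follows essentially the same route as the paper: both represent an arbitrary functional on the order continuous Banach function space $L_1(\nu)$ as integration against a K\"othe-dual element $g$ over the Rybakov measure $\mu$, apply Lemma~\ref{lemma:Rademacher-weakly-null} to $fg\in L_1(\mu)$, and test against the constant function. No gaps.
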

\begin{proof} Fix $f\in L_1(\nu)$ and take any $\varphi \in L_1(\nu)^*$. Since $L_1(\nu)$ is an order continuous Banach function space over~$\mu$, there is
$g\in L_1(\mu)$ such that for each $h\in L_1(\nu)$ we have $hg\in L_1(\mu)$ and 
$$
	\varphi(h)=\int_\Omega hg \, d\mu
$$ 
(see, e.g., \cite[p.~29]{lin-tza-2}).
In particular, we have $fg\in L_1(\mu)$ and so $(fgr_n)_{n\in\N}$ is weakly null in~$L_1(\mu)$. Hence,
$\varphi(f r_n)=\int_\Omega fg r_n \, d\mu \to 0$ as $n\to \infty$.
\end{proof}

\begin{proof}[Proof of Theorem \ref{theorem:main}]
The fact that $\mathcal{R}(\nu)=\{\nu(A): A\in \Sigma\} \sub E^+$ ensures that
\begin{equation}\label{equation:positive-norm}
	\|h\|_{L_1(\nu)}=\left\|\int_\Omega |h| \, d\nu\right\|_E
	\quad\text{for all $h\in L_1(\nu)$}
\end{equation}
(see, e.g., \cite[Lemma~3.13]{oka-alt}), where $\|\cdot\|_E$ denotes the norm of~$E$.
Let $\mu$ be a Rybakov control measure of~$\nu$ and let $(r_n)_{n\in \N}$ be a Rademacher-type sequence on $(\Omega,\Sigma,\mu)$.

Fix $f\in S_{L_1(\nu)}$. For each $n\in \N$ we have $fr_n\in L_1(\nu)$ and 
$$
	\|fr_n\|_{L_1(\nu)}\stackrel{\stackrel{\eqref{equation:positive-norm}}{}}{=} \left\|\int_{\Omega} |fr_n| \, d\nu \right\|_E=\left\|\int_{\Omega} |f| \, d\nu \right\|_E
	\stackrel{\stackrel{\eqref{equation:positive-norm}}{}}{=} \|f\|_{L_1(\nu)}=1.
$$
Moreover, the sequence $(fr_n)_{n\in \N}$ is weakly null in $L_1(\nu)$ (by Lemma~\ref{lemma:Rademacher-weakly-null-Bfs}).

We claim that $\|f\pm fr_n\|_{L_1(\nu)}\to 1$ as $n\to \infty$. Indeed, for each $n\in \N$ we have $1\pm r_n\geq 0$ and so
$$
	g_n^{\pm}:=|f\pm fr_n|=|f|(1\pm r_n)=|f|\pm |f|r_n.
$$ 
Therefore, both sequences $(g_n^+)_{n\in \N}$ and $(g_n^-)_{n\in \N}$ converge weakly to~$|f|$ in~$L_1(\nu)$ (by Lemma~\ref{lemma:Rademacher-weakly-null-Bfs} applied to~$|f|$).
Hence, $(I_\nu(g_n^+))_{n\in \N}$ and $(I_\nu(g_n^-))_{n\in \N}$ converge weakly to~$I_\nu(|f|)$ in~$E$, where
$I_\nu:L_1(\nu)\to E$ denotes the integration operator of~$\nu$.
Observe that each $g_n^{\pm}$ belongs to the order interval $K:=[0,2|f|] \sub L_1(\nu)$, which is
uniformly $\mu$-integrable and weakly compact.

Since $\mathcal{R}(\nu)$ is relatively norm compact, $I_\nu$ maps
every bounded, uniformly $\mu$-integrable subset of~$L_1(\nu)$ to a relatively norm compact subset of~$E$
(see, e.g., \cite[Proposition~3.56(I)]{oka-alt}). Therefore, $I_\nu(K)$ is norm compact. It follows that both sequences
$(I_\nu(g_n^+))_{n\in \N}$ and $(I_\nu(g_n^-))_{n\in \N}$ are norm convergent to $I_\nu(|f|)$, so 
$$
	\|f\pm fr_n\|_{L_1(\nu)}\stackrel{\stackrel{\eqref{equation:positive-norm}}{}}{=}
	\left\|I_\nu(g_n^{\pm}) \right\|_E \to \left\|I_\nu(|f|) \right\|_E \stackrel{\stackrel{\eqref{equation:positive-norm}}{}}{=} \|f\|_{L_1(\nu)}=1
$$
as $n\to \infty$. The proof is finished.
\end{proof}

The rest of this section is devoted to providing applications of Theorem~\ref{theorem:main}.

\subsection{Ces\`{a}ro function spaces}\label{subsection:cesaro}
The Ces\`{a}ro ``operator'' is the map $f \mapsto \mathcal{C}(f)$ defined pointwise by $\mathcal{C}(f)(x):=\frac{1}{x}\int_0^x f(t) \, dt$
for any $f\in L_1[0,1]$. Given a rearrangement invariant Banach function space~$(E,\|\cdot\|_E)$ on~$[0,1]$, 
the Ces\`{a}ro function space $[\mathcal{C},E]$
is the Banach function space on~$[0,1]$ consisting of all $f\in L_1[0,1]$ for which $\mathcal{C}(|f|)\in E$, equipped
with the norm $\|f\|_{[\mathcal{C},E]}:=\|\mathcal{C}(|f|)\|_{E}$. 

\begin{corollary}\label{corollary:Cesaro}
Let $E$ be an order continuous rearrangement invariant Banach function space on $[0,1]$.
Then the Ces\`{a}ro function space $[\mathcal{C},E]$ is WASQ.
\end{corollary}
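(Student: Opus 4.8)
The plan is to realize the Ces\`{a}ro function space $[\mathcal{C},E]$ as the $L_1$ space of a suitable vector measure and then invoke Theorem~\ref{theorem:main}. Following the results of Curbera and Ricker~\cite{cur-ric-4}, I would consider the set function $\nu\colon \Sigma \to E$, defined on the Borel (equivalently, Lebesgue) $\sigma$-algebra $\Sigma$ of $[0,1]$, given by
\[
    \nu(A) := \mathcal{C}(\chi_A), \quad\text{so that}\quad \nu(A)(x)=\frac{1}{x}\int_0^x \chi_A(t)\,dt=\frac{|A\cap[0,x]|}{x}.
\]
Since $E$ is order continuous, $\nu$ is a countably additive vector measure, and the very definition of $[\mathcal{C},E]$ together with the identity $I_\nu(f)=\mathcal{C}(f)$ for simple functions~$f$ yields a Banach lattice isometry $[\mathcal{C},E]=L_1(\nu)$. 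In particular $\|f\|_{[\mathcal{C},E]}=\|\mathcal{C}(|f|)\|_E=\|I_\nu(|f|)\|_E$, which matches the identity~\eqref{equation:positive-norm} used in the proof of Theorem~\ref{theorem:main}. This representation is precisely the content of the results of Curbera and Ricker that I intend to quote.

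With the representation in hand, the next step is to check the three hypotheses of Theorem~\ref{theorem:main}. First, $\nu(A)=\mathcal{C}(\chi_A)\ge 0$, so $\mathcal{R}(\nu)\sub E^+$, as required. Secondly, a set $A$ is $\nu$-null if and only if it is Lebesgue-null (indeed $\nu(A)=0$ forces $|A\cap[0,x]|=0$ for all~$x$, hence $|A|=0$); since Lebesgue measure on $[0,1]$ is non-atomic, any $A\in\Sigma$ of positive measure splits into two pieces of positive measure, neither of which is $\nu$-null, so $\nu$ is non-atomic.

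The main point, and the step I expect to be the real obstacle, is verifying that $\mathcal{R}(\nu)=\{\mathcal{C}(\chi_A):A\in\Sigma\}$ is relatively norm compact in $E$. Writing $g_A:=\mathcal{C}(\chi_A)$, one has $0\le g_A\le 1$ everywhere. For a fixed $\delta\in(0,1)$ the restrictions $g_A|_{[\delta,1]}$ are uniformly bounded and equi-Lipschitz (their a.e. derivatives are bounded by $2/\delta$), so by the Arzel\`{a}--Ascoli theorem $\{g_A\chi_{[\delta,1]}:A\in\Sigma\}$ is relatively compact in the sup norm; since $L_\infty[0,1]$ embeds continuously into $E$ (because $|h|\le \|h\|_\infty\chi_{[0,1]}$ forces $\|h\|_E\le\|h\|_\infty\|\chi_{[0,1]}\|_E$), it is relatively compact in $E$ as well. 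On the other hand, order continuity of $E$ gives $\|\chi_{[0,\delta]}\|_E\to 0$ as $\delta\to 0$, whence $\|\chi_{[0,\delta]}g_A\|_E\le\|\chi_{[0,\delta]}\|_E$ is uniformly small in~$A$. Splitting $g_A=g_A\chi_{[0,\delta]}+g_A\chi_{[\delta,1]}$ and combining a finite net for the second summand with the uniformly small first summand shows that $\mathcal{R}(\nu)$ is totally bounded, hence relatively norm compact, in $E$. This compactness is exactly the property of the Ces\`{a}ro vector measure established by Curbera and Ricker.

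Once these three conditions are in place, Theorem~\ref{theorem:main} applies directly to~$\nu$ and yields that $L_1(\nu)=[\mathcal{C},E]$ is WASQ, completing the argument. The crux is thus the interplay between the Arzel\`{a}--Ascoli compactness of the averages $g_A$ away from the origin and the order continuity of~$E$, which tames their behaviour near~$0$; the rearrangement invariance of~$E$ enters only through the Curbera--Ricker representation itself.
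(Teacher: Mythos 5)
Your proposal is correct and follows essentially the same route as the paper: both realize $[\mathcal{C},E]$ as $L_1(\nu)$ for the vector measure $\nu(A)=\mathcal{C}(\chi_A)$ via the Curbera--Ricker results and then apply Theorem~\ref{theorem:main}. The only difference is that you additionally sketch a self-contained (and correct) Arzel\`{a}--Ascoli argument for the relative norm compactness of $\mathcal{R}(\nu)$, a fact the paper simply quotes from \cite[Theorem~2.1]{cur-ric-4}.
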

\begin{proof}
By \cite[Theorem~2.1]{cur-ric-4}, the formula $\nu(A):=\mathcal{C}(\chi_A)$
defines an $E^+$-valued countably additive measure on the Lebesgue $\sigma$-algebra of~$[0,1]$ such that 
$\nu$ has the same null sets as the Lebesgue measure (hence it is non-atomic) and the range of~$\nu$ is relatively norm compact.
Since $E$ is order continuous, we have $[\mathcal{C},E]=L_1(\nu)$ (see \cite[Proposition~3.1]{cur-ric-4}).
The conclusion now follows from Theorem~\ref{theorem:main}. 
\end{proof}

We stress that the weigthed Ces\`aro function space $C_{p,w}$ on $[0,1]$ considered in~\cite{kub}, for $1\leq p<\infty$ and a measurable positive function~$w$, 
is equal to $[\mathcal{C},E]$ for $E=L_p((xw(x))^p \, dx)$. Thus, the previous corollary generalizes~\cite[Lemma~3.3]{kub} in the case of~$[0,1]$.

\subsection{K\"{o}the-Bochner spaces}\label{subsection:kothe}

Let $(E,\|\cdot\|_E)$ be a Banach function space on a finite measure space $(\Omega,\Sigma,\mu)$ and let $(Y,\|\cdot\|)$ be a Banach space.
The K\"{o}the-Bochner space $E(Y)$ is the Banach space of all (equivalence classes of) strongly $\mu$-measurable functions
$f:\Omega \to Y$ such that $\|f(\cdot)\|\in E$, with the norm $\|f\|_{E(Y)}:=\|\|f(\cdot)\|\|_E$. Here
$\|f(\cdot)\|:\Omega \to \erre$ is the $\mu$-measurable function given by $t \mapsto \|f(t)\|$.

The following result should be compared with \cite[Theorem 3.1]{har-3}, where it is proved  that $E(Y)$ is LASQ if $Y$ is LASQ.

\begin{theorem}\label{theorem:Kothe-Bochner}
Let $(\Omega,\Sigma)$ be a measurable space, let $X$ be a Banach lattice and let $\nu:\Sigma \to X$ be a non-atomic countably additive measure
such that $\mathcal{R}(\nu)$ is a relatively norm compact subset of~$X^+$. Let $\mu$ be a Rybakov control measure of~$\nu$ and consider $E:=L_1(\nu)$ as 
a Banach function space on $(\Omega,\Sigma,\mu)$. Let $Y$ be a Banach space. Then the K\"{o}the-Bochner space
$E(Y)$ is WASQ.
\end{theorem}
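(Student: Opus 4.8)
The plan is to mimic the proof of Theorem~\ref{theorem:main}, replacing the scalar-valued functions by $Y$-valued ones and using a Rademacher-type sequence to produce the required weakly null perturbations. Let $(r_n)_{n\in\N}$ be a Rademacher-type sequence on $(\Omega,\Sigma,\mu)$ as furnished by Lemma~\ref{lemma:Rademacher-weakly-null}. Given $f\in S_{E(Y)}$, I would set $y_n:=r_n f\in E(Y)$, where the product is taken pointwise ($t\mapsto r_n(t)f(t)$). Since $|r_n|=1$ $\mu$-a.e., the pointwise norm satisfies $\|y_n(\cdot)\|=\|r_n(\cdot)\|\,\|f(\cdot)\|=\|f(\cdot)\|$ in~$E$, so $\|y_n\|_{E(Y)}=\|f\|_{E(Y)}=1$ for every~$n$. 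The key point is that the \emph{modulus} of the perturbation coincides with the modulus of~$f$, exactly as in the scalar case, so the norm computations will go through verbatim once the weak-null claim is established.

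First I would verify that $(y_n)_{n\in\N}$ is weakly null in $E(Y)$. This is the main place where the vector-valued setting requires care. The natural approach is to use a dense or norming family of functionals on $E(Y)$: functionals coming from $E^*(Y^*)$ (or at least those of the form $g\mapsto \int_\Omega \langle g(t),h(t)\rangle\,d\mu(t)$ for suitable $h$). Testing such a functional against $y_n$ gives $\int_\Omega r_n(t)\langle f(t),h(t)\rangle\,d\mu(t)$, and the scalar function $t\mapsto\langle f(t),h(t)\rangle$ lies in $L_1(\mu)$ because $\|f(\cdot)\|\in E$ and the pairing is controlled by $\|f(\cdot)\|\,\|h(\cdot)\|$; then property~(ii) of the Rademacher-type sequence forces $\int_\Omega r_n\langle f,h\rangle\,d\mu\to0$. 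The delicate step is to reduce an arbitrary $\varphi\in E(Y)^*$ to this integral form, or at least to approximate it by such functionals, which should follow from the order continuity of~$E$ together with a standard representation of the dual of a K\"othe-Bochner space over an order continuous base space.

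Next I would establish that $\|f\pm y_n\|_{E(Y)}\to1$. Here I would again exploit the pointwise identity: since $1\pm r_n\geq0$ $\mu$-a.e.,
\begin{equation*}
    \|(f\pm y_n)(\cdot)\|=(1\pm r_n)\,\|f(\cdot)\|=\|f(\cdot)\|\pm r_n\|f(\cdot)\|.
\end{equation*}
Writing $\phi:=\|f(\cdot)\|\in S_E$, the functions $\|(f\pm y_n)(\cdot)\|$ are precisely the scalar functions $\phi\pm r_n\phi$ that appear in the proof of Theorem~\ref{theorem:main}. Thus $\|f\pm y_n\|_{E(Y)}=\|\phi\pm r_n\phi\|_E$, and since $\phi=\|f(\cdot)\|$ is the modulus of an element of the WASQ space $E=L_1(\nu)$ (indeed $\phi\in S_{L_1(\nu)}$), the argument of Theorem~\ref{theorem:main} applied to~$\phi$ shows $\|\phi\pm r_n\phi\|_E\to1$. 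This is where the hypotheses on $\nu$ (non-atomicity, relative norm compactness of $\mathcal{R}(\nu)\subset X^+$) enter, precisely via the norm-compactness of the integration operator on order intervals.

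The main obstacle is the weak-null verification in the vector-valued dual: unlike the scalar case, the dual of $E(Y)$ is not simply a K\"othe dual, so I expect to spend the bulk of the work identifying a sufficiently rich class of functionals of integral type and justifying the passage from a general $\varphi$ to them. Everything else reduces, by the pointwise modulus identity, to the already-proved scalar statement for $E=L_1(\nu)$, so the proof should be short once this reduction is set up cleanly.
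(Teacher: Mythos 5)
Your proposal is correct and follows essentially the same route as the paper: define $y_n=r_nf$, prove weak nullity by representing each $\varphi\in E(Y)^*$ (using order continuity of $E$) as an integral functional induced by a $w^*$-scalarly measurable $Y^*$-valued function so that $\varphi(fr_n)=\int_\Omega\langle\varphi,f\rangle r_n\,d\mu\to0$, and reduce the norm asymptotics to the scalar case via the pointwise identity $\|(f\pm fr_n)(\cdot)\|=\|f(\cdot)\|(1\pm r_n)$. The dual representation you flag as the main obstacle is exactly the standard result the paper invokes (Lin, \emph{K\"othe-Bochner function spaces}, Theorem~3.2.4), so no additional work is needed there.
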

\begin{proof}
Let $(r_n)_{n\in \N}$ be a Rademacher-type sequence on $(\Omega,\Sigma,\mu)$ (see Lemma~\ref{lemma:Rademacher-weakly-null}).

Fix $f\in S_{E(Y)}$. Then $fr_n\in S_{E(Y)}$ for every $n\in \N$ and 
we claim that $(fr_n)_{n\in \N}$ is weakly null in~$E(Y)$. Indeed, given any $\varphi \in E(Y)^*$,
the order continuity of~$E$ allows us to represent $\varphi$ as a $w^*$-scalarly $\mu$-measurable function $\varphi:\Omega \to Y^*$
such that $\|\varphi(\cdot)\| \in E^*$, the duality being given by
$$
	\varphi(h)=\int_\Omega \langle \varphi,h\rangle \, d\mu
	\quad\text{for every $h\in E(Y)$} 
$$
(see, e.g., \cite[Theorem~3.2.4]{lin-J}).
Here we denote by $\|\cdot\|$ the norm of both $Y$ and~$Y^*$, while 
$\langle \varphi,h \rangle \in L_1(\mu)$ is defined by $t \mapsto \langle \varphi(t),h(t) \rangle$.
Therefore, we have $\varphi(fr_n)=\int_\Omega \langle \varphi, f \rangle r_n \, d\mu \to 0$ as $n\to \infty$.
This shows that $(fr_n)_{n\in \N}$ is weakly null in~$E(Y)$, as claimed.

Moreover, we have $\|f(\cdot)\| \in S_E$ and so
$$
	\|f\pm fr_n\|_{E(Y)}=
	\big\|\|f(\cdot )\| (1\pm r_n)\big\|_{L_1(\nu)} \to 1
$$
as $n\to \infty$, by the proof of Theorem~\ref{theorem:main}.
\end{proof}

To the best of our knowledge, the following corollary seems to be new:

\begin{corollary}\label{corollary:Lebesgue-Bochner}
Let $(\Omega,\Sigma,\mu)$ be a non-atomic finite measure space and let $Y$ be a Banach space. Then the Lebesgue-Bochner space $L_1(\mu,Y)$
is WASQ.
\end{corollary}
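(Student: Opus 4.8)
The plan is to realize $L_1(\mu,Y)$ as a K\"othe--Bochner space to which Theorem~\ref{theorem:Kothe-Bochner} applies. Observe that $L_1(\mu,Y)$ is precisely the K\"othe--Bochner space $E(Y)$ associated to the scalar Banach function space $E=L_1(\mu)$: both consist of the strongly $\mu$-measurable functions $f:\Omega\to Y$ with $\|f(\cdot)\|\in L_1(\mu)$, carrying the identical norm $\int_\Omega\|f(t)\|\,d\mu$. So it suffices to exhibit $E=L_1(\mu)$ as the space $L_1(\nu)$ of a vector measure $\nu$ meeting the hypotheses of Theorem~\ref{theorem:Kothe-Bochner}.

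The key point is that the target Banach lattice may be taken one-dimensional. Let $X:=\erre$ and let $\nu:=\mu$, regarded as an $\erre$-valued countably additive measure. First I would check the identification $L_1(\nu)=L_1(\mu)$: for $x^*=c\in\erre=X^*$ one has $|x^*\circ\nu|=|c|\,\mu$, so $\|f\|_{L_1(\nu)}=\sup_{|c|\le1}|c|\int_\Omega|f|\,d\mu=\int_\Omega|f|\,d\mu$. Next, $\mathcal{N}(\nu)$ coincides with the family of $\mu$-null sets, whence the atoms of $\nu$ are exactly the atoms of $\mu$; since $\mu$ is non-atomic, so is $\nu$. Moreover $\mu$ itself is a Rybakov control measure of $\nu$ (take $x_0^*=1$, so that $|x_0^*\circ\nu|=\mu$). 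Finally, the range $\mathcal{R}(\nu)=\{\mu(A):A\in\Sigma\}$ is contained in $[0,\mu(\Omega)]\subseteq X^+$; being a bounded subset of the finite-dimensional space $X=\erre$, it is automatically relatively norm compact, so no Lyapunov-type argument is needed. With these verifications, Theorem~\ref{theorem:Kothe-Bochner} applied to $X=\erre$, $\nu=\mu$, and the control measure $\mu$ yields that $E(Y)=L_1(\mu,Y)$ is WASQ.

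I do not expect a genuine obstacle here: the entire content is the bookkeeping that packages $L_1(\mu,Y)$ as an instance of Theorem~\ref{theorem:Kothe-Bochner}, and the compactness hypothesis---usually the delicate one---comes for free because the range lives in $\erre$. As an alternative that avoids the vector-measure formalism altogether, one could argue directly as in the proof of Theorem~\ref{theorem:main}: fix a Rademacher-type sequence $(r_n)_{n\in\N}$ on $(\Omega,\Sigma,\mu)$ and, for $f\in S_{L_1(\mu,Y)}$, set $y_n:=fr_n$. Then $\|y_n\|=\|f\|=1$ since $|r_n|=1$; the estimate $\|f\pm fr_n\|=\int_\Omega\|f(t)\|\,(1\pm r_n(t))\,d\mu\to1$ holds because $\|f(\cdot)\|\in L_1(\mu)$ and $\int_\Omega\|f(t)\|\,r_n(t)\,d\mu\to0$; and the weak nullity of $(fr_n)_{n\in\N}$ follows by representing $\varphi\in L_1(\mu,Y)^*$ through a $w^*$-measurable $Y^*$-valued kernel and using that $\langle\varphi(\cdot),f(\cdot)\rangle\in L_1(\mu)$, so that $\varphi(fr_n)=\int_\Omega\langle\varphi(t),f(t)\rangle r_n(t)\,d\mu\to0$.
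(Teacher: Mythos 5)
Your proposal is correct and is exactly the derivation the paper intends: the corollary is stated without proof as the specialization of Theorem~\ref{theorem:Kothe-Bochner} to the scalar case $X=\erre$, $\nu=\mu$, where the range is trivially relatively compact in $\erre^+$ and $\mu$ is its own Rybakov control measure. Your alternative direct argument with a Rademacher-type sequence is also sound and simply unwinds the proofs of Theorems~\ref{theorem:main} and~\ref{theorem:Kothe-Bochner} in this special case.
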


\begin{remark}
The previous result is also interesting from the point of view of the identification of the space $L_1(\mu,Y)$ as the projective tensor product $L_1(\mu)\pten Y$ 
(see, e.g., \cite[p.~228, Example~10]{die-uhl-J}). In general, given two Banach spaces $X$ and~$Y$, it is not known whether $X\pten Y$ is 
WASQ if $X$ is WASQ. It is even open if $X\pten Y$ has the D2P if $X$ has the D2P (see \cite[Question~4.2]{lan-lim-rue}).
\end{remark}

\subsection{An example}\label{subsection:nakano}

The aim of this subsection 
is to give an example of a WASQ Banach function space as in Theorem~\ref{theorem:main} which is not an~$\mathcal L_1$-space. To do so, we need to introduce some terminology first.
Throughout this subsection $(p_n)_{n\in \N}$ is a sequence in~$(1,\infty)$. 

The {\em Nakano sequence space} $\ell_{(p_n)}$ is the Banach lattice
consisting of all sequences $(a_n)_{n\in \N}\in \mathbb{R}^\N$ such that $\sum_{n\in \N}|s a_n|^{p_n}<\infty$ for some $s>0$, equipped
with the coordinate-wise ordering and the norm
$$
	\big\|(a_n)_{n\in \N}\big\|_{\ell_{(p_n)}}:=\inf\left\{t>0:\, \sum_{n\in \N}\left|\frac{a_n}{t}\right|^{p_n} \leq 1 \right\}.
$$

Given a sequence of Banach spaces $(X_n,\|\cdot\|_{X_n})_{n\in \N}$, its {\em $\ell_{(p_n)}$-sum} 
is the Banach space $\ell_{(p_n)}(X_n)$ consisting of all sequences $(x_n)_{n\in \N}\in \prod_{n\in \N}X_n$ 
such that
$(\|x_n\|_{X_n})_{n\in \N}\in \ell_{(p_n)}$, with the norm
$$
	\big\|(x_n)_{n\in \N}\big\|_{\ell_{(p_n)}(X_n)}:=\Big\|\big(\|x_n\|_{X_n}\big)_{n\in \N}\Big\|_{\ell_{(p_n)}}.
$$
If $(p_n)_{n\in \N}$ is bounded, then the unit vectors form an unconditional basis of~$\ell_{(p_n)}$ (see, e.g., \cite[Theorem~3.5]{woo})
and so \cite[Proposition~5.2]{abr-lan-lim} applies to get:

\begin{corollary}\label{corollary:Nakano-sum} 
Suppose that $(p_n)_{n\in \N}$ is bounded and let $(X_n)_{n\in \N}$ be a sequence of Banach spaces which are WASQ.
Then $\ell_{(p_n)}(X_n)$ is WASQ.
\end{corollary}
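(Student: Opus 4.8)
The plan is to obtain the corollary as an immediate instance of the stability result \cite[Proposition~5.2]{abr-lan-lim}, which guarantees that an $E$-sum of WASQ spaces is again WASQ under the relevant assumption that $E$ carries a (normalized) unconditional basis. Thus essentially all the work lies in recognizing $\ell_{(p_n)}(X_n)$ as an $E$-sum of that kind, with $E=\ell_{(p_n)}$, and in checking that $\ell_{(p_n)}$ meets the hypothesis on its basis. The identification is built into the definitions: $\ell_{(p_n)}(X_n)$ consists of those $(x_n)_{n\in\N}$ for which the scalar sequence $(\|x_n\|_{X_n})_{n\in\N}$ lies in $E=\ell_{(p_n)}$, normed by the $E$-norm of that sequence, which is precisely the $E$-sum of the family $(X_n)_{n\in\N}$.

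The first genuine step is to verify the basis hypothesis, and this is exactly where the boundedness of $(p_n)_{n\in\N}$ enters: by \cite[Theorem~3.5]{woo} the canonical unit vectors $(e_n)_{n\in\N}$ form an unconditional Schauder basis of $\ell_{(p_n)}$. I would then record the (standard) refinements that make the sum result applicable: this basis is in fact $1$-unconditional and $\ell_{(p_n)}$ is an order continuous Banach lattice. Both follow from the form of the Nakano norm, since $\|(a_n)\|_{\ell_{(p_n)}}$ depends only on the moduli $|a_n|$ and is monotone for the coordinatewise order (so sign changes leave it unchanged and shrinking coordinates cannot increase it), together with the fact that $\sup_n p_n<\infty$ forces the tails of a vector to vanish in norm.

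With these structural facts in place, \cite[Proposition~5.2]{abr-lan-lim} applies directly to $E=\ell_{(p_n)}$ and the WASQ spaces $(X_n)_{n\in\N}$, yielding that $\ell_{(p_n)}(X_n)$ is WASQ. I do not expect any serious obstacle here: the only point requiring care is the bookkeeping that matches the Nakano setting to the hypotheses of Proposition~5.2, and the analytic content is already contained in that cited proposition. For orientation, I expect its mechanism to be the following: given $x=(x_n)_n$ in the unit sphere $S_{\ell_{(p_n)}(X_n)}$, one uses the WASQ property of each coordinate space $X_n$, rescaled to the level $\|x_n\|_{X_n}$, to produce weakly null perturbation sequences inside each $X_n$, assembles them diagonally, and then invokes $1$-unconditionality to control the $E$-norm of $x\pm(\text{perturbation})$ and order continuity of the $E$-norm to pass the relevant coordinatewise limits to norm limits, so that the assembled sequence is weakly null in the sum and witnesses WASQ there.
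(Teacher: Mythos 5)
Your proposal is correct and follows exactly the route the paper takes: the boundedness of $(p_n)_{n\in\N}$ gives, via \cite[Theorem~3.5]{woo}, that the unit vectors form an unconditional basis of $\ell_{(p_n)}$, after which \cite[Proposition~5.2]{abr-lan-lim} applied to the $E$-sum with $E=\ell_{(p_n)}$ yields the conclusion. The additional remarks on $1$-unconditionality, order continuity and the internal mechanism of that proposition are harmless but not needed beyond what the cited results already provide.
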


We denote by $\lambda$ the Lebesgue measure on the Lebesgue $\sigma$-algebra $\Sigma$ of~$[0,1]$.

\begin{proposition}\label{proposition:example-L1nu}
Let $(A_n)_{n\in \N}$ be a partition of~$[0,1]$ such that $A_n\in \Sigma \setminus \mathcal{N}(\lambda)$ for all $n\in \N$. Then
the map $\nu:\Sigma \to \ell_{(p_n)}$ given by
$$
	\nu(A):=\big(\lambda(A\cap A_n)\big)_{n\in \N}
	\quad
	\text{for all $A\in \Sigma$}
$$
is a well-defined countably additive measure. Moreover: 
\begin{enumerate}
\item[(i)] $\nu$ is non-atomic and $\mathcal{R}(\nu)$ is relatively norm compact.
\item[(ii)] $L_1(\nu)$ is WASQ.
\item[(iii)] For each $n\in\N$, let $\lambda_n$ be the restriction of $\lambda$ to the $\sigma$-algebra on~$A_n$ given by
$\Sigma_n:=\{A\cap A_n:A\in \Sigma\}$. If $(p_n)_{n\in \N}$ is bounded, then the map 
$$
	\Phi: L_1(\nu) \to \ell_{(p_n)}(L_1(\lambda_n))
$$ 
given by
$$
	\Phi(f):=\big(f|_{A_n}\big)_{n\in \N}
	\quad\text{for all $f\in L_1(\nu)$}
$$
is a well-defined lattice isometry.
\end{enumerate}
\end{proposition}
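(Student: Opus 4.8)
The plan is to dispatch the preliminary claim and items (i)--(ii) quickly and then concentrate on (iii). The workhorse for the preliminary part is the elementary estimate that $\|\nu(C)\|_{\ell_{(p_n)}}\le\lambda(C)$ whenever $\lambda(C)\le 1$: writing $t:=\lambda(C)$, each coordinate satisfies $\lambda(C\cap A_n)/t\le 1$, and since $p_n>1$ this gives $\sum_n(\lambda(C\cap A_n)/t)^{p_n}\le\frac1t\sum_n\lambda(C\cap A_n)=1$, so the Nakano norm is $\le t$. Because $\sum_n\lambda(A\cap A_n)=\lambda(A)\le 1$ for every $A\in\Sigma$, this already shows $\nu(A)\in\ell_{(p_n)}$, so $\nu$ is well defined; and applying the estimate to the tails $C_m:=B\setminus\bigcup_{k\le m}B_k$ of a disjoint union $B=\bigcup_k B_k$ yields $\|\nu(B)-\sum_{k\le m}\nu(B_k)\|_{\ell_{(p_n)}}=\|\nu(C_m)\|_{\ell_{(p_n)}}\le\lambda(C_m)\to 0$, which is countable additivity.

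For (i), I would first compute the null sets: since the $A_n$ partition $[0,1]$, a set $A$ has $\nu(B)=0$ for all $B\sub A$ iff $\lambda(B\cap A_n)=0$ for all $n$ and all such $B$, which (taking $B=A$) is equivalent to $\lambda(A)=0$; hence $\mathcal{N}(\nu)=\mathcal{N}(\lambda)$, and as an atom of $\nu$ would then be an atom of the non-atomic measure $\lambda$, $\nu$ is non-atomic. For the range, note $0\le\nu(A)\le u:=\nu([0,1])=(\lambda(A_n))_n$ coordinatewise, so $\mathcal{R}(\nu)$ lies in the order interval $[0,u]$, which I would show is relatively norm compact by a tail argument: given $\epsilon\in(0,1]$, since $\lambda(A_n)\to 0$ and $\sum_n\lambda(A_n)=1$ one may choose $N$ with $\lambda(A_n)<\epsilon$ for $n>N$ and $\sum_{n>N}\lambda(A_n)\le\epsilon$; then for every $x\in[0,u]$ one has $x_n/\epsilon<1$ and $p_n>1$, so $\sum_{n>N}(x_n/\epsilon)^{p_n}\le\frac1\epsilon\sum_{n>N}\lambda(A_n)\le 1$, i.e. the tails are uniformly $\le\epsilon$ in norm, while the first $N$ coordinates range over the compact cube $\prod_{n\le N}[0,\lambda(A_n)]$. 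This makes $[0,u]$ totally bounded, giving (i). Then (ii) is immediate: $\ell_{(p_n)}$ is a Banach lattice and, by (i), $\nu$ satisfies all hypotheses of Theorem~\ref{theorem:main} (its range is a relatively norm compact subset of $\ell_{(p_n)}^+$), so $L_1(\nu)$ is WASQ.

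The substance is in (iii). The backbone is the integration formula $\int_A g\,d\nu=(\int_{A\cap A_n}g\,d\lambda)_n$ for $g\in L_1(\nu)$, which I would verify by evaluating the defining identity of the vector integral at each coordinate functional $e_m^*$ and using that $e_m^*\circ\nu$ is the positive measure $A\mapsto\lambda(A\cap A_m)$; in particular $\nu$-integrability of $f$ forces $f|_{A_n}\in L_1(\lambda_n)$ for every $n$ (take $x^*=e_n^*$). Combining this formula with~\eqref{equation:positive-norm} gives, for every $f\in L_1(\nu)$,
\[
	\|f\|_{L_1(\nu)}=\Big\|\int_\Omega|f|\,d\nu\Big\|_{\ell_{(p_n)}}
	=\Big\|\big(\|f|_{A_n}\|_{L_1(\lambda_n)}\big)_n\Big\|_{\ell_{(p_n)}}=\|\Phi(f)\|_{\ell_{(p_n)}(L_1(\lambda_n))},
\]
so $\Phi$ is a well-defined linear isometry (injectivity being free from the isometry).

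Surjectivity is the step I expect to be the \emph{main obstacle}, since it requires manufacturing a genuinely $\nu$-integrable function from abstract data rather than checking integrability coordinate by coordinate. Given $(g_n)_n\in\ell_{(p_n)}(L_1(\lambda_n))$, I would glue the $g_n$ into a measurable $f$ with $f|_{A_n}=g_n$ and approximate by the finitely supported functions $f_k:=\sum_{n\le k}g_n$, which are manifestly $\nu$-integrable. By the norm formula, for $k>j$ the quantity $\|f_k-f_j\|_{L_1(\nu)}$ equals a tail of $(\|g_n\|_{L_1(\lambda_n)})_n$ measured in $\ell_{(p_n)}$, and the boundedness of $(p_n)$ forces these tails to vanish (this is exactly where that hypothesis enters): if $\sup_n p_n=P<\infty$ and $\sum_n\|g_n\|_{L_1(\lambda_n)}^{p_n}<\infty$, then for $\epsilon\in(0,1]$ the tail sum $\sum_{n>N}(\|g_n\|_{L_1(\lambda_n)}/\epsilon)^{p_n}\le\epsilon^{-P}\sum_{n>N}\|g_n\|_{L_1(\lambda_n)}^{p_n}\to 0$. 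Hence $(f_k)$ is Cauchy and converges in the complete space $L_1(\nu)$ to some $\tilde f$; since the inclusion $L_1(\nu)\hookrightarrow L_1(\mu)$ into a Rybakov control measure is continuous, a subsequence converges $\mu$-a.e. (equivalently $\lambda$-a.e.), while $f_k\to f$ everywhere pointwise, so $\tilde f=f$ and therefore $f\in L_1(\nu)$ with $\Phi(f)=(g_n)_n$. Finally, since the order of $L_1(\nu)$ is the $\lambda$-a.e. order and that of the $\ell_{(p_n)}$-sum is the coordinatewise $\lambda_n$-a.e. order, $f\ge 0$ iff $\Phi(f)\ge 0$, so $\Phi$ and $\Phi^{-1}$ are both positive; a bijective positive isometry with positive inverse is a lattice isometry, completing (iii).
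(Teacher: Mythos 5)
Your proof is correct, and it reaches the paper's conclusions by a genuinely different route at two points. For the preliminary claim and part~(i) the paper factors $\nu$ through the $\ell_1$-valued measure $\tilde{\nu}(A)=(\lambda(A\cap A_n))_{n\in\N}$, obtaining countable additivity from $\|\tilde{\nu}(A)\|_{\ell_1}=\lambda(A)$ and relative norm compactness of the range from relative weak compactness of ranges of vector measures together with the Schur property of~$\ell_1$; your direct modular estimate $\|\nu(C)\|_{\ell_{(p_n)}}\leq\lambda(C)$ and the total-boundedness of the order interval $[0,\nu([0,1])]$ prove the same facts by hand, trading the soft Schur-property argument for a few explicit tail computations. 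The more substantial divergence is in the surjectivity of~$\Phi$: the paper invokes the Kluv\'anek--Knowles criterion \cite{klu-kno} (since $\ell_{(p_n)}$ contains no isomorphic copy of~$c_0$ when $(p_n)_{n\in\N}$ is bounded, it suffices to check $f\in L_1(|\varphi\circ\nu|)$ for every $\varphi$ in the dual $\ell_{(q_n)}$, which is a H\"older-type computation), whereas you exhaust $f$ by the truncations $f_k=\sum_{n\leq k}g_n$, show these are Cauchy in $L_1(\nu)$ via the isometry formula and the tail estimate $\sum_{n>N}(\|g_n\|/\epsilon)^{p_n}\leq\epsilon^{-P}\sum_{n>N}\|g_n\|^{p_n}$, and identify the limit with $f$ through a.e.\ convergence with respect to a Rybakov control measure. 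Your argument is more self-contained (it needs only completeness of $L_1(\nu)$ and continuity of the inclusion into $L_1(\mu)$, both already available in the paper's preliminaries, rather than the structure of $\ell_{(p_n)}^*$ and the $c_0$-criterion), while the paper's is shorter once the cited machinery is granted. Both approaches use the boundedness of $(p_n)_{n\in\N}$ in an essential way --- the paper to rule out copies of~$c_0$, you for the tail estimate and for the implicit upgrade from $\sum_n(s\|g_n\|)^{p_n}<\infty$ for \emph{some} $s>0$ to the case $s=1$; that last step is worth a sentence in a final write-up, but it is not a gap.
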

\begin{proof}
Define $\tilde{\nu}:\Sigma \to \ell_1$ by
$$
	\tilde{\nu}(A):=\big(\lambda(A\cap A_n)\big)_{n\in \N}
	\quad
	\text{for all $A\in \Sigma$}.
$$
Note that $\tilde{\nu}$ is finitely additive and satisfies $\|\tilde{\nu}(A)\|_{\ell_1}=\lambda(A)$ for all $A\in \Sigma$,
hence $\tilde{\nu}$ is countably additive. Since the inclusion $\iota:\ell_1\hookrightarrow \ell_{(p_n)}$ is a well-defined operator,  the composition
$\nu=\iota\circ \tilde{\nu}:\Sigma \to \ell_{(p_n)}$ is a countably additive measure.

(i) Clearly, we have $\mathcal{N}(\lambda)=\mathcal{N}(\nu)$, so
$\nu$ is non-atomic. The range of any countably additive Banach space-valued measure
is relatively weakly compact (see, e.g., \cite[p.~14, Corollary~7]{die-uhl-J}). Hence, by the Schur property of~$\ell_1$,
the set $\mathcal{R}(\tilde{\nu})$ is relatively norm compact. Alternatively, this can also be deduced  
from the usual criterion of relative norm compactness in~$\ell_1$ (see, e.g., \cite[p.~6, Exercise~6]{die-J}).
Therefore, $\mathcal{R}(\nu)=\iota(\mathcal{R}(\tilde{\nu}))$ is relatively norm compact as well. 
 
(ii) follows from~(i) and Theorem~\ref{theorem:main} (note that $\nu$ takes values in $\ell_{(p_n)}^+$).

(iii) Fix $f\in L_1(\nu)$. For each $n\in \N$, let $\pi_n\in \ell_{(p_n)}^*$ be the the $n$th-coordinate functional. Since 
$(\pi_n\circ \nu)(A)=\lambda(A\cap A_n)$ for all $A\in \Sigma$ and $f\in L_1(\pi_n\circ \nu)$, we have $f|_{A_n} \in L_1(\lambda_n)$ and 
$$
	\pi_n\left( I_\nu(|f|) \right)=
	\int_{[0,1]} |f| \, d(\pi_n\circ \nu)=\left\|f|_{A_n}\right\|_{L_1(\lambda_n)}.
$$
Hence, $(\left\|f|_{A_n}\right\|_{L_1(\lambda_n)})_{n\in \N}=I_\nu(|f|)\in \ell_{(p_n)}$. Moreover, the fact that
$\nu$ takes values in $\ell_{(p_n)}^+$ ensures that
$$
	\|f\|_{L_1(\nu)}=\big\|I_\nu(|f|)\big\|_{\ell_{(p_n)}}=
	\left\|\left(\left\|f|_{A_n}\right\|_{L_1(\lambda_n)}\right)_{n\in \N}\right\|_{\ell_{(p_n)}}
$$
(see, e.g., \cite[Lemma~3.13]{oka-alt}). Thus, $\Phi$ is a well-defined isometric embedding. Clearly, $\Phi$ is a lattice homomorphism. It remains
to check that $\Phi$ is surjective.

Let $(f_n)_{n\in \N}\in \ell_{(p_n)}(L_1(\lambda_n))$. Define $f\in L_0[0,1]$ by declaring $f|_{A_n}:=f_n$ for all $n\in \N$. 
Since $(p_n)_{n\in \N}$ is bounded, the space $\ell_{(p_n)}$ contains no isomorphic copy of~$c_0$
(see, e.g., \cite[Theorem~3.5]{woo}). Therefore, in order to prove that $f\in L_1(\nu)$ it suffices to show
that $f\in L_1(|\varphi \circ \nu|)$ for every $\varphi\in \ell_{(p_n)}^*$ (see, e.g., \cite[p.~31, Theorem~1]{klu-kno}).
It is known that $\ell_{(p_n)}^*=\ell_{(q_n)}$, where $(q_n)_{n\in \N}$ is the sequence in~$(1,\infty)$
defined by $1/p_n+1/q_n=1$ for all $n\in \N$, the duality being
$$
	\big\langle (a_n)_{n\in \N},(b_n)_{n\in \N}\big\rangle=\sum_{n\in \N}a_nb_n
	\quad\text{for all $(a_n)_{n\in \N}\in \ell_{(p_n)}$ and $(b_n)_{n\in \N}\in \ell_{(q_n)}$}
$$
(see, e.g., \cite[Theorem~4.2]{woo}). Take any $\varphi=(b_n)_{n\in \N} \in \ell_{(q_n)}$. Then
$$
	(\varphi \circ \nu)(A)=\sum_{n\in \N}b_n \lambda(A\cap A_n)
	\quad
	\text{for all $A\in \Sigma$}
$$
and so the variation of $\varphi \circ \nu$ is given by
$$
	|\varphi \circ \nu|(A)=\sum_{n\in \N}|b_n| \lambda(A\cap A_n)
	\quad
	\text{for all $A\in \Sigma$}.
$$
Then
\begin{multline*}
	\int_{[0,1]} |f| \, d|\varphi \circ \nu|=
	\sum_{n\in \N} \int_{A_n} |f| \, d|\varphi \circ \nu| \\ =
	\sum_{n\in \N}|b_n|\int_{A_n}|f| \, d\lambda=
	\sum_{n\in \N}|b_n| \|f_n\|_{L_1(\lambda_n)}<\infty,
\end{multline*}
because $(\|f_n\big\|_{L_1(\lambda_n)})_{n\in \N} \in \ell_{(p_n)}$ and $\varphi\in \ell_{(q_n)}$.
Thus, $f\in L_1(\nu)$ and we have $\Phi(f)=(f_n)_{n\in \N}$. The proof is finished. 
\end{proof}

\begin{remark}
Note that each $L_1(\lambda_n)$ is WASQ (in fact, it is isometrically isomorphic to~$L_1[0,1]$). Hence, when $(p_n)_{n\in \N}$ is bounded,
the fact that $L_1(\nu)$ is WASQ can also be deduced from Corollary~\ref{corollary:Nakano-sum} and Proposition~\ref{proposition:example-L1nu}(iii).
\end{remark}

\begin{proposition}\label{proposition:example-L1nu-2}
Let $\nu$ be as in Proposition~\ref{proposition:example-L1nu}. 
If $(p_n)_{n\in \N}$ is bounded and $\frac{p_n}{(p_n-1)\log n}\to 0$ as $n\to \infty$, then $L_1(\nu)$ is not an $\mathcal{L}_1$-space.
\end{proposition}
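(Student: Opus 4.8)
The plan is to exploit the identification from Proposition~\ref{proposition:example-L1nu}(iii): since $(p_n)_{n\in\N}$ is bounded, $L_1(\nu)$ is lattice isometric to the Nakano sum $X:=\ell_{(p_n)}(L_1(\lambda_n))$, each $L_1(\lambda_n)$ being isometric to $L_1[0,1]$. Its dual is $X^*=\ell_{(q_n)}(L_\infty(\lambda_n))$ with $1/p_n+1/q_n=1$, the pairing being $\langle\varphi,f\rangle=\sum_n\int_{A_n}\varphi_n f_n\,d\lambda$ and $\|\varphi\|_{X^*}=\|(\|\varphi_n\|_\infty)_n\|_{\ell_{(q_n)}}$. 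Observe first that the hypothesis says exactly that $q_n/\log n\to 0$, because $\frac{p_n}{(p_n-1)\log n}=\frac{q_n}{\log n}$. The key conceptual point is to choose the right obstruction: $X$ is a Banach lattice satisfying a lower $2$-estimate on disjoint vectors, hence it has cotype~$2$, so cotype cannot witness the failure of the $\mathcal L_1$ property. Instead I would invoke Grothendieck's theorem, that every bounded operator from an $\mathcal L_1$-space into a Hilbert space is absolutely ($1$-)summing. Thus it suffices to produce a bounded operator $u\colon X\to\ell_2$ that is not $1$-summing.

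The candidate is the ``diagonal averaging'' operator $u\big((f_n)_n\big):=\big(\int_{A_n}f_n\,d\lambda\big)_n$. Since $|\int_{A_n}f_n\,d\lambda|\le\|f_n\|_{L_1(\lambda_n)}$, one gets $\|u(f)\|_{\ell_2}\le\|(\|f_n\|_{L_1(\lambda_n)})_n\|_{\ell_2}$, so $u$ is bounded as soon as the inclusion $\ell_{(p_n)}\hookrightarrow\ell_2$ is bounded; this holds with norm~$1$ whenever $p_n\le2$ for all~$n$, because then $\sum_n a_n^2\le\sum_n|a_n|^{p_n}\le1$ on the unit ball of $\ell_{(p_n)}$ (each $|a_n|\le 1$ there). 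I would dispose of the complementary situation separately: if $p_n\ge2+\delta$ for infinitely many~$n$, the normalised disjoint vectors $\mathbf 1_{A_n}/\lambda(A_n)$ with such indices span blocks isometric to pieces of $\ell_{(p_n)}$ whose cotype-$2$ constants tend to infinity, so $X$ already fails cotype~$2$ and is not an $\mathcal L_1$-space. After this reduction we may assume $p_n\le2$, so $\|u\|\le1$.

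To show $\pi_1(u)=\infty$ I would test $u$ on the normalised indicators $x^{(k)}:=\mathbf 1_{A_k}/\lambda(A_k)$ placed in the $k$-th coordinate. Then $u(x^{(k)})=e_k$, so for any finite block $B$ of indices $\sum_{k\in B}\|u(x^{(k)})\|_{\ell_2}=|B|$. For the weakly-$\ell_1$ denominator, take $\varphi=(\varphi_n)_n\in B_{X^*}$ and put $b_k:=\|\varphi_k\|_\infty$; then $|\langle\varphi,x^{(k)}\rangle|\le b_k$ and $\sum_{k\in B}b_k^{q_k}\le1$, whence $b_k\le1$ and $\sum_{k\in B}b_k^{Q}\le1$ for $Q:=\max_{k\in B}q_k$. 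The power-mean (Hölder) inequality then yields $\sum_{k\in B}b_k\le|B|^{1-1/Q}$, so the weak-$\ell_1$ norm of $(x^{(k)})_{k\in B}$ is at most $|B|^{1-1/Q}$ and therefore $\pi_1(u)\ge|B|/|B|^{1-1/Q}=|B|^{1/Q}$. Finally I would push the block into the tail: given $M$, choose $\epsilon<1/(2\log M)$ and, using $q_n/\log n\to0$, an index $N$ with $q_k\le\epsilon\log k$ for $k\ge N$; the block $B=\{N+1,\dots,2N\}$ then has $Q\le\epsilon\log(2N)$, so $|B|^{1/Q}=N^{1/Q}\ge N^{1/(\epsilon\log 2N)}\ge M$ for $N$ large. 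Hence $\pi_1(u)\ge M$ for every~$M$, i.e.\ $\pi_1(u)=\infty$, contradicting Grothendieck's theorem; thus $L_1(\nu)$ is not an $\mathcal L_1$-space.

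The main obstacle is conceptual rather than computational: one must recognise that the obvious necessary condition ``$\mathcal L_1\Rightarrow$ cotype~$2$'' is useless here, since $X$ does have cotype~$2$, and that the decisive invariant is instead the $1$-summing norm of operators into $\ell_2$. Once this is in place, the heart is the estimate $|B|^{1-1/Q}$ for the weak-$\ell_1$ norm of the coordinate indicators, which converts the opaque hypothesis into the transparent requirement $q_n=o(\log n)$: it is precisely the Nakano dual norm $\ell_{(q_n)}$ (rather than a genuine $\ell_\infty$ norm at coordinate level) that suppresses this weak-$\ell_1$ norm strictly below $|B|$, and this is what separates $X$ from a true $\mathcal L_1$-space such as $L_1[0,1]$. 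The one genuinely technical wrinkle is the boundedness of~$u$, which is what forces the preliminary case distinction around $p_n=2$.
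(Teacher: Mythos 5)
Your route is genuinely different from the paper's. The paper argues structurally: Wnuk's lemma shows that the hypothesis forces $\ell_{(p_n)}\not\cong\ell_1$, while $\ell_{(p_n)}$ has an unconditional basis, so by the Lindenstrauss--Pe{\l}czy\'{n}ski theorem it cannot be complemented in an $\mathcal{L}_1$-space; since Proposition~\ref{proposition:example-L1nu}(iii) yields a complemented copy of $\ell_{(p_n)}$ inside $L_1(\nu)$, the conclusion follows. You instead give a direct quantitative argument via Grothendieck's theorem, exhibiting an operator into $\ell_2$ with infinite $1$-summing norm, thereby in effect re-proving the relevant content of Wnuk's lemma. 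Your core computation is correct: $u(x^{(k)})=e_k$, the modular estimate $\sum_{k\in B}b_k^{Q}\le 1$ with $Q=\max_{k\in B}q_k$, the bound $\sum_{k\in B}b_k\le C|B|^{1-1/Q}$, and the passage to tail blocks converting $q_n=o(\log n)$ into $\pi_1(u)=\infty$ all work (up to the harmless constant coming from the fact that the dual of the Luxemburg norm on $\ell_{(p_n)}$ is the Orlicz--Amemiya norm on $\ell_{(q_n)}$, equivalent to the Luxemburg norm within a factor of~$2$). Your approach buys a self-contained, quantitative proof; the paper's buys brevity by outsourcing to two known theorems.

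There is, however, a genuine gap in your preliminary reduction: the dichotomy ``$p_n\ge 2+\delta$ for infinitely many $n$, for some fixed $\delta>0$'' versus ``$p_n\le 2$ for all $n$'' is not exhaustive. Its failure only gives $\limsup_n p_n\le 2$, which still allows $p_n>2$ for infinitely many $n$ with $p_n\to 2$. Such sequences satisfy the hypotheses of the proposition (take $p_n=2+4\log\log n/\log n$ for large $n$: then $(p_n)$ is bounded and $q_n\to 2$, so $q_n=o(\log n)$), and for them the inclusion $\ell_{(p_n)}\hookrightarrow\ell_2$ can genuinely be unbounded (with $b_n=(n\log n)^{-1/2}$ one gets $b_n^{p_n}\le n^{-1}(\log n)^{-3}$, so $\sum_n b_n^{p_n}<\infty$ while $\sum_n b_n^2=\infty$), so $u$ is not bounded; and no blocks with exponents bounded away from $2$ are available, so your fixed-$\delta$ cotype argument does not apply either. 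The repair is to use the exhaustive dichotomy: either the formal inclusion $\ell_{(p_n)}\to\ell_2$ is bounded, in which case your Grothendieck argument runs verbatim, or it is not, in which case there are finitely supported $a$ with $\|a\|_{\ell_{(p_n)}}\le 1$ and $\|a\|_{\ell_2}$ arbitrarily large; since the $x^{(k)}$ are disjointly supported and $\bigl\|\sum_k\pm a_k x^{(k)}\bigr\|_X=\|a\|_{\ell_{(p_n)}}$ for every choice of signs, the cotype-$2$ constant of $X$ is then at least $\|a\|_{\ell_2}$, so $X$ fails cotype~$2$ and cannot be an $\mathcal{L}_1$-space. (This also corrects your opening remark that $X$ always has cotype~$2$, which is false when infinitely many $p_n$ exceed $2$ substantially.) With this repair the proof is complete.
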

\begin{proof}
Since $(p_n)_{n\in \N}$ is bounded, the space $\ell_{(p_n)}$ has an unconditional basis
(see, e.g., \cite[Theorem~3.5]{woo}). The additional condition on $(p_n)_{n\in \N}$ 
implies that $\ell_{(p_n)}$ is not isomorphic to~$\ell_1$, 
see \cite[Lemma~4]{wnu}. Therefore, $\ell_{(p_n)}$ cannot be isomorphic to
a complemented subspace of an $\mathcal{L}_1$-space (see, e.g., \cite[Theorem~3.13]{die-alt}).

Since $L_1(\nu)$ contains a complemented subspace isomorphic to~$\ell_{(p_n)}$ (this can be deduced from Proposition~\ref{proposition:example-L1nu}(iii)), 
it follows that $L_1(\nu)$ is not an $\mathcal{L}_1$-space.
\end{proof}

For instance, the sequence $p_n:=1+(\log (n+1))^{-1/2}$ satisfies the conditions of Proposition~\ref{proposition:example-L1nu-2}.

\section{Proof of Theorem~\ref{theorem:countere}}\label{section:renorming}

The aim of this section is to provide a proof of Theorem \ref{theorem:countere}.
The first step is to prove the result for the space~$c_0$, see Theorem~\ref{theorem:countpartcas} below.
The proof of this particular case is based on the renorming technique of \cite[Theorem~2.4]{bec-lop-rue:15a},
where it was shown that every Banach space containing an isomorphic copy of~$c_0$
admits an equivalent norm so that its unit ball contains non-empty relatively weakly open subsets with arbitrarily small diameter, but every slice has diameter~$2$.

The symbol $\mathbb N^{<\omega}$ stands for the {\em Baire tree}, i.e., the set of all {\em finite} 
sequences of positive integers. The empty sequence is included in $\mathbb N^{<\omega}$
as the root of the tree. The order on $\mathbb N^{<\omega}$ is defined by declaring that $\alpha \preceq \beta$ if and only if $\beta$ extends $\alpha$. 
Given $\alpha\in \N^{<\omega}$ and $p\in \N$, we denote by $\alpha\smallfrown p \in \N^{<\omega}$ the sequence defined
by $\alpha\smallfrown p:=(\alpha_1,\dots,\alpha_n,p)$ if $\alpha=(\alpha_1,\dots,\alpha_n)$ or
$\alpha\smallfrown p:=(p)$ (a sequence with just one element) if $\alpha=\emptyset$. The following is standard (see, e.g., \cite[p.~857]{bec-lop-rue:15a}):

\begin{lemma}\label{lemma:PHI}
There exists a bijection $\phi:\mathbb N^{<\omega}\rightarrow \mathbb N$ such that:
\begin{enumerate}
\item[(i)] $\phi (\emptyset)=1$.
\item[(ii)] $\phi(\alpha)\leq\phi(\beta)$ for all $\alpha,\beta \in \mathbb N^{<\omega}$ with $\alpha\preceq \beta$.
\item[(iii)] $\phi(\alpha\smallfrown j)< \phi(\alpha \smallfrown k)$ for every $\alpha\in \mathbb N^{<\omega}$ and for all $j, k\in\mathbb N$ with $j< k$.
\end{enumerate}
\end{lemma}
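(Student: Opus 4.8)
The plan is to realize $\phi$ as the enumeration induced by a single integer-valued \emph{weight} on the tree that is finite-to-one and strictly monotone in the two directions dictated by (ii) and (iii). Concretely, for $\alpha=(\alpha_1,\dots,\alpha_n)\in \mathbb N^{<\omega}$ I would set $w(\alpha):=\sum_{i=1}^{n}\alpha_i$, with the convention $w(\emptyset)=0$. Because every entry of a node is a positive integer, this weight has three features that make it perfectly suited to the three conditions. First, for each $m\in \N\cup\{0\}$ the level set $W_m:=\{\alpha\in \mathbb N^{<\omega}:w(\alpha)=m\}$ is finite: $W_0=\{\emptyset\}$, while for $m\ge 1$ any $\alpha\in W_m$ has length at most $m$ and all entries at most $m$. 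Second, if $\alpha\prec\beta$ then $w(\beta)-w(\alpha)$ equals the sum of the extra (positive) entries of $\beta$, hence $w(\alpha)<w(\beta)$. Third, $w(\beta\smallfrown k)-w(\beta\smallfrown j)=k-j$, so $w(\beta\smallfrown j)<w(\beta\smallfrown k)$ whenever $j<k$.

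With $w$ in hand I would build $\phi$ simply by listing the tree in order of increasing weight. Since each $W_m$ is finite, I fix once and for all an arbitrary linear ordering of every $W_m$ and then enumerate $W_0,W_1,W_2,\dots$ consecutively, declaring $\phi(\alpha)$ to be the position of $\alpha$ in the resulting list. As the sets $W_m$ partition $\mathbb N^{<\omega}$ into finite blocks and there are infinitely many nonempty blocks, this produces a bijection $\phi:\mathbb N^{<\omega}\to \N$; moreover $\emptyset$ occupies the first position, so $\phi(\emptyset)=1$ and (i) holds.

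It then remains only to check (ii) and (iii), and here the point is that $\phi$ is \emph{weight-monotone}: if $w(\gamma)<w(\delta)$ then $\gamma$ lies in an earlier block than $\delta$, whence $\phi(\gamma)<\phi(\delta)$. For (ii), a proper ancestor $\alpha\prec\beta$ satisfies $w(\alpha)<w(\beta)$ by the second feature, so $\phi(\alpha)<\phi(\beta)$, while the case $\alpha=\beta$ is trivial. For (iii), the third feature gives $w(\beta\smallfrown j)<w(\beta\smallfrown k)$ for $j<k$, hence $\phi(\beta\smallfrown j)<\phi(\beta\smallfrown k)$. Observe that the arbitrary within-block ordering of each $W_m$ is never constrained by (ii) or (iii): every pair of nodes that appears in those conditions has \emph{strictly} different weight, which is exactly why the tie-break is harmless.

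The construction is essentially forced once the weight is chosen, so I do not expect any serious obstacle; the only thing one must get right is selecting a weight that is simultaneously finite-to-one (to guarantee an order of type $\omega$, and hence a genuine bijection onto $\N$) and strictly increasing both along branches and along the sibling order. The sum-of-entries weight does all three jobs at once, and the monotonicity of the induced enumeration then delivers (i)--(iii) with no further computation.
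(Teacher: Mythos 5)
Your proof is correct and complete: the sum-of-entries weight is finite-to-one (each level set $W_m$ is contained in the finite set of sequences of length at most $m$ with entries in $\{1,\dots,m\}$), strictly increases along proper extensions and along siblings, and every pair of nodes constrained by (ii) or (iii) lies in distinct weight classes, so the arbitrary tie-break within each block is indeed harmless. The paper offers no proof of this lemma, only a citation to it as a standard fact, and your weight-ordered enumeration is exactly the kind of routine construction that citation points to, so there is nothing to compare beyond noting that you have supplied the details the paper omits.
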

  
Let $c$ be the subspace of~$\ell_\infty$ consisting of all convergent sequences and let
$c(\mathbb N^{<\omega})$ be the subspace of~$\ell_\infty(\mathbb N^{<\omega})$ defined by
$$
	c(\mathbb N^{<\omega}):=\{x\in \ell_\infty(\mathbb N^{<\omega}): \, x\circ \phi^{-1} \in c\}.
$$
Clearly, $c(\mathbb N^{<\omega})$ and $c$ are isometric, hence $c(\mathbb N^{<\omega})$ is isomorphic to~$c_0$.
We denote by $\lim \in c(\mathbb N^{<\omega})^*$ the functional defined by
$$
	\lim x:=\lim_{n\to \infty} x(\phi^{-1}(n))
	\quad
	\text{for all $x\in c(\mathbb N^{<\omega})$}.
$$
For each $\alpha\in \mathbb N^{<\omega}$ we denote by $e_\alpha^*\in c(\mathbb N^{<\omega})^*$
the functional defined by
$$
	e_\alpha^*(x):=x(\alpha) 	\quad
	\text{for all $x\in c(\mathbb N^{<\omega})$}.
$$
Given $\alpha\in \mathbb N^{<\omega}$ we define $x_\alpha\in \ell_\infty(\mathbb N^{<\omega})$ by the formula
$$
	x_\alpha(\beta):=\left\{
	\begin{array}{cc}
	1 & \mbox{if }\beta\preceq \alpha\\
	-1 & \mbox{otherwise}
	\end{array} \right.
$$
so that $x_\alpha\in S_{c(\mathbb N^{<\omega})}$ and $\lim x_\alpha=-1$. Define
$$
	A:=\{x_\alpha: \, \alpha\in\mathbb N^{<\omega}\} \sub S_{c(\N^{<\omega})}
	\quad\mbox{and} \quad
	K:=\overline{\conv}(A\cup -A) \sub B_{c(\N^{<\omega})}.
$$

We will need the following result (see \cite[Proposition~2.2]{bec-lop-rue:15a}):

\begin{lemma}\label{lemma:smallopenweak}
Let $n\in\mathbb N$ and $\rho>0$. Define
\begin{multline*}
	W_{n,\rho}:=\Big\{x\in K:  \, e_{\emptyset\smallfrown i}^*(x)>\frac{2}{n}-1-2\rho \ \text{ for all }i\in \{1,\dots,n\}  \\ \text{ and }\lim x <-1+\rho\Big\}.
\end{multline*}
Then $W_{n,\rho}$ is a non-empty relatively weakly open subset of~$K$ and ${\rm diam}(W_{n,\rho})\to 0$ as $n\to \infty$ and $\rho\to 0$.
\end{lemma}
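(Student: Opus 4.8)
The plan is to establish the three claims in turn; non-emptiness and relative weak openness are quick, and the diameter bound is the real content. For non-emptiness I would exhibit the point $z:=\frac1n\sum_{i=1}^n x_{\emptyset\smallfrown i}\in\conv(A)\sub K$. Since the only $\preceq$-predecessors of $\emptyset\smallfrown i$ are $\emptyset$ and $\emptyset\smallfrown i$ itself, one has $x_{\emptyset\smallfrown i}(\emptyset\smallfrown j)=1$ when $i=j$ and $-1$ otherwise, so $e_{\emptyset\smallfrown j}^*(z)=\frac1n(1-(n-1))=\frac2n-1>\frac2n-1-2\rho$ for each $j\in\{1,\dots,n\}$, while $\lim z=\frac1n\sum_{i=1}^n\lim x_{\emptyset\smallfrown i}=-1<-1+\rho$; hence $z\in W_{n,\rho}$. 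For relative weak openness I would note that $W_{n,\rho}$ is the intersection of $K$ with the $n$ half-spaces $\{e_{\emptyset\smallfrown i}^*(\cdot)>\frac2n-1-2\rho\}$ and with $\{\lim(\cdot)<-1+\rho\}$; as $e_{\emptyset\smallfrown i}^*,\lim\in c(\mathbb N^{<\omega})^*$ these are weakly open, and a finite intersection of weakly open sets is weakly open.

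The heart of the matter is the diameter estimate. Fix $x,y\in W_{n,\rho}$ and $\epsilon'>0$; since $K=\overline{\conv}(A\cup-A)$, I would choose finite convex combinations $z=\sum_k\lambda_k z_k$ and $w$ of elements of $A\cup-A$ with $\|x-z\|_\infty,\|y-w\|_\infty<\epsilon'$. As $e_\beta^*$ and $\lim$ have norm one, $z$ and $w$ satisfy the defining inequalities with $\rho$ weakened to $\rho+\epsilon'$. Writing $p$ for the $\lambda$-mass of $z$ carried by $A$ and $q=1-p$ for that carried by $-A$, and using $\lim x_\alpha=-1$ and $\lim(-x_\alpha)=1$, the relaxed constraint $\lim z<-1+\rho+\epsilon'$ reads $1-2p<-1+\rho+\epsilon'$, i.e. $q<\frac{\rho+\epsilon'}{2}$: almost all mass sits on $A$. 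For a node $\beta$ I introduce the subtree weight $\widetilde W_\beta:=\sum\{\lambda_k:z_k=x_{\alpha_k}\in A,\ \beta\preceq\alpha_k\}$; from $x_\alpha(\beta)=2\,\mathbf 1[\beta\preceq\alpha]-1$ one gets $e_\beta^*(z)=2\widetilde W_\beta-p+\theta$ with $|\theta|\le q$.

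Then I would exploit both families of constraints together. The subtrees rooted at $\emptyset\smallfrown 1,\dots,\emptyset\smallfrown n$ being pairwise disjoint gives $\sum_{i=1}^n\widetilde W_{\emptyset\smallfrown i}\le p\le1$, while each coordinate constraint forces $\widetilde W_{\emptyset\smallfrown i}>\frac1n-O(\rho+\epsilon')$; comparing these shows each $\widetilde W_{\emptyset\smallfrown i}$ lies within $O(n(\rho+\epsilon'))$ of $\frac1n$ and that the positive mass in subtrees rooted at $\emptyset\smallfrown m$ with $m>n$ is $O(n(\rho+\epsilon'))$. A node-by-node bound of $\|z-w\|_\infty=\sup_\beta|e_\beta^*(z)-e_\beta^*(w)|$ then finishes: for $\beta=\emptyset$ both values are close to $1$; if the first entry of $\beta$ is $m\le n$ then $\beta\preceq\alpha$ forces $\emptyset\smallfrown m\preceq\alpha$, whence $0\le\widetilde W_\beta\le\widetilde W_{\emptyset\smallfrown m}\le\frac1n+O(n(\rho+\epsilon'))$ and $|e_\beta^*(z)-e_\beta^*(w)|\le\frac2n+O(n(\rho+\epsilon'))$; and if the first entry exceeds $n$ both subtree weights are $O(n(\rho+\epsilon'))$. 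Therefore $\|x-y\|_\infty\le2\epsilon'+\|z-w\|_\infty\le\frac2n+O(n(\rho+\epsilon'))$, and letting $\epsilon'\to0$ yields $\diam(W_{n,\rho})\le\frac2n+O(n\rho)$, which becomes arbitrarily small on taking $n$ large and then $\rho$ small.

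I expect the main obstacle to be exactly this last estimate, and in particular the passage from the closed convex hull to genuine finite convex combinations while keeping track of the perturbation of both the $\lim$-constraint and the $n$ coordinate constraints. Once the subtree-weight bookkeeping is in place, the two families of constraints combine cleanly to concentrate almost all the positive mass, nearly uniformly, onto the first $n$ subtrees, and it is this concentration that forces the small diameter.
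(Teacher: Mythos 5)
Your argument is correct: the witness $\frac1n\sum_{i=1}^n x_{\emptyset\smallfrown i}$ and the weak-openness observation are exactly what is needed, and the diameter estimate — pass to finite convex combinations, use the $\lim$-constraint to force the $-A$ mass below $(\rho+\epsilon')/2$, then combine the coordinate constraints with the disjointness of the subtrees rooted at $\emptyset\smallfrown 1,\dots,\emptyset\smallfrown n$ to pin every subtree weight within $O(n(\rho+\epsilon'))$ of $1/n$ and conclude $\diam(W_{n,\rho})\le \frac2n+O(n\rho)$ — is sound. Note that the paper does not prove this lemma itself but imports it from \cite[Proposition~2.2]{bec-lop-rue:15a}; your mass-concentration bookkeeping is essentially the argument of that reference (in the spirit of Argyros--Odell--Rosenthal), so nothing further is needed.
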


The following lemma is elementary and its proof will be omitted:

\begin{lemma}\label{lemma:convex-decomposition}
Let $V$ be a linear space, let $A_1,\dots,A_m$ be subsets of~$V$ and let $v\in {\rm conv}(A_1\cup \dots\cup  A_m)$.
Then there exist $v_i\in {\rm conv}(A_i)$ and $c_i \in [0,\infty)$ for $i\in \{1,\dots,m\}$
such that $\sum_{i=1}^m c_i v_i=v$ and $\sum_{i=1}^m c_i=1$.
\end{lemma}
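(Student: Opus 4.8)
The plan is to unwind the definition of the convex hull and then regroup the terms according to which set each point comes from. First I would use that $v\in\conv(A_1\cup\dots\cup A_m)$ to write $v$ as a finite convex combination
\[
	v=\sum_{j=1}^N \lambda_j a_j,
	\qquad \lambda_j\ge 0,\quad \sum_{j=1}^N \lambda_j=1,
\]
where each $a_j\in A_1\cup\dots\cup A_m$. For every $j$ I would choose an index $\sigma(j)\in\{1,\dots,m\}$ with $a_j\in A_{\sigma(j)}$; this is possible precisely because $a_j$ lies in the union, and making one such choice per $j$ ensures that the fibers $J_i:=\sigma^{-1}(i)$, $i\in\{1,\dots,m\}$, form a genuine partition of $\{1,\dots,N\}$.

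Next I would define the weights $c_i:=\sum_{j\in J_i}\lambda_j\ge 0$, so that $\sum_{i=1}^m c_i=\sum_{j=1}^N\lambda_j=1$. For each index $i$ with $c_i>0$ I set $v_i:=\frac{1}{c_i}\sum_{j\in J_i}\lambda_j a_j$. The coefficients $\lambda_j/c_i$ are nonnegative and sum to $1$ over $j\in J_i$, and every such $a_j$ belongs to $A_i$, so $v_i\in\conv(A_i)$. A direct computation then yields
\[
	\sum_{i=1}^m c_i v_i=\sum_{i:\,c_i>0}\ \sum_{j\in J_i}\lambda_j a_j=\sum_{j=1}^N\lambda_j a_j=v,
\]
which is exactly the desired decomposition.

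Finally I would tidy up the degenerate indices. If $c_i=0$ for some $i$, then the term $c_i v_i$ vanishes, so the actual value of $v_i\in\conv(A_i)$ is immaterial and one may take $v_i$ to be any fixed element of $A_i$; and if in addition some $A_i$ is empty, it never occurs among the chosen points $a_j$, so one may simply discard it from the list (equivalently, assume without loss of generality that each $A_i$ is non-empty). I do not expect any genuine obstacle here: the statement is a purely combinatorial bookkeeping fact about convex combinations. The only point requiring minimal care is the assignment $\sigma$, which must place each chosen point in exactly one of the sets so that the fibers $J_i$ partition the index set and the regrouping above is valid.
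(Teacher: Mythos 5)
Your argument is correct and is the standard bookkeeping one; the paper itself omits the proof of this lemma as elementary, so there is nothing to diverge from. Your handling of the degenerate cases (zero weights and empty $A_i$) is exactly the minimal care the statement requires.
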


\begin{lemma}\label{lemma:SQ}
Let $(X,\|\cdot\|)$ be a Banach space, let $S \sub S_{(X,\|\cdot\|)}$ be a dense set and let $0<\delta<1$.
Suppose that for all $0<r,s < \delta$ and for every finite set $\{x_1,\ldots, x_n\}\sub S$ there exists $y\in S_{(X,\|\cdot\|)}$ satisfying
$$
	\|rx_i \pm sy\|\le1 \quad\text{for every $i\in \{1,\dots,n\}$.}
$$ 
Then $(X,\|\cdot\|)$ is $(r,s)$-SQ for all $0<r,s<\delta$.
\end{lemma}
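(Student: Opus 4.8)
The plan is to pass from an arbitrary finite subset of $S_{(X,\|\cdot\|)}$ to a finite subset of the dense set~$S$ by approximation, while manufacturing the slack needed to absorb the approximation error by applying the hypothesis at slightly \emph{larger} parameters. So fix $0<r,s<\delta$ and a finite set $\{x_1,\dots,x_n\}\sub S_{(X,\|\cdot\|)}$; the goal is to produce a single $y\in S_{(X,\|\cdot\|)}$ with $\|rx_i\pm sy\|\le 1$ for every $i$.

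The key point is the order of the choices. First I would fix a scaling factor $\theta\in(0,1)$ close enough to~$1$ that $r':=r/\theta<\delta$ and $s':=s/\theta<\delta$; this is possible precisely because $\max(r,s)/\delta<1$, so any $\theta\in(\max(r,s)/\delta,\,1)$ works. Only \emph{after} $\theta$ is fixed (which determines the available slack $1-\theta>0$) would I invoke the density of~$S$ to pick $\tilde x_i\in S$ with $\|x_i-\tilde x_i\|\le (1-\theta)/r$ for each~$i$.

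Now I apply the hypothesis to the finite set $\{\tilde x_1,\dots,\tilde x_n\}\sub S$ with the admissible parameters $0<r',s'<\delta$, obtaining a single $y\in S_{(X,\|\cdot\|)}$ with $\|r'\tilde x_i\pm s'y\|\le 1$ for all~$i$. Since $r=\theta r'$ and $s=\theta s'$, a plain scaling gives $\|r\tilde x_i\pm sy\|=\theta\,\|r'\tilde x_i\pm s'y\|\le\theta$ for every~$i$, and the triangle inequality then yields
$$
	\|rx_i\pm sy\|\le r\|x_i-\tilde x_i\|+\|r\tilde x_i\pm sy\|\le r\cdot\frac{1-\theta}{r}+\theta=1,
$$
as required. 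The same $y$ serves all indices simultaneously because the hypothesis already furnishes a common $y$ for the finite family $\{\tilde x_i\}$.

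The only subtle point—and the thing a naive argument gets wrong—is that approximating $x_i$ by $\tilde x_i$ and then applying the hypothesis \emph{at $r,s$ themselves} would only bound $\|rx_i\pm sy\|$ by $1+\varepsilon$. The essential idea is that we are proving $(r,s)$-SQ for all $r,s<\delta$ with \emph{strict} inequality, and this strictness is exactly what lets us run the hypothesis at the slightly larger values $r/\theta,\,s/\theta$ (still below~$\delta$); the resulting factor $\theta<1$ creates precisely enough room to reabsorb the approximation error and land at $\le 1$ rather than $1+\varepsilon$.
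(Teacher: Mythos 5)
Your proof is correct and is essentially the paper's own argument: both approximate the given unit vectors by elements of $S$, invoke the hypothesis at parameters enlarged by a common factor strictly less than $\delta/\max(r,s)$, and use the resulting slack (your $1-\theta$, the paper's $1-\tfrac{r}{r'}$ with $r'=r/\theta$) to absorb the approximation error via the triangle inequality. The two write-ups coincide up to renaming of the scaling factor and the tolerance.
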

\begin{proof}
Fix $0<r,s<\delta$. Choose $r<r'<\delta$ such that $s':=s\frac{r'}{r}<\delta$ and then
choose $\theta>0$ such that $r\theta+\frac{r}{r'}\leq 1$. 
Take any finite set $\{x_1,\ldots, x_n\}\sub S_{(X,\|\cdot\|)}$. Since $S$ is dense in~$S_{(X,\|\cdot\|)}$, there exist $x'_1,\dots,x'_n\in S$
such that $\|x_i-x'_i\|\leq \theta$ for every $i\in \{1,\dots,n\}$. By the assumption, we can find $y\in S_{(X,\|\cdot\|)}$
in such a way that $\|r'x'_i \pm s'y\|\le1$ for every $i\in \{1,\dots,n\}$.
Then
$$
	\|rx_i \pm sy\|\leq
	r\|x_i-x'_i\|+\frac{r}{r'}\|r'x'_i \pm s'y\| \leq r\theta +  \frac{r}{r'} \leq 1
$$
for every $i\in \{1,\dots,n\}$. This shows that $(X,\|\cdot\|)$ is $(r,s)$-SQ.
\end{proof}

\begin{theorem}\label{theorem:countpartcas}
Let $0<\varepsilon<1$. Then there exists an equivalent norm $|\cdot|$ on~$c_0$ such that:
\begin{enumerate}
\item[(i)] $(c_0,|\cdot|)$ has the slice-D2P.
\item[(ii)] There are non-empty relatively weakly open subsets of $B_{(c_0,|\cdot|)}$ of arbitrarily small diameter.
\item[(iii)] $(c_0,|\cdot|)$ is $(r,s)$-SQ for all $0<r,s < \frac{1-\varepsilon}{1+\varepsilon}$.
\end{enumerate}
\end{theorem}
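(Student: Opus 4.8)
The plan is to prove Theorem~\ref{theorem:countpartcas} by renorming $c_0\cong c(\mathbb N^{<\omega})$ so that its unit ball $B$ is a calibrated modification of the symmetric convex body $K=\overline{\conv}(A\cup -A)$ underlying the renorming of \cite[Theorem~2.4]{bec-lop-rue:15a}. That reference already extracts items~(i) and~(ii) from $K$, so the parameter $\varepsilon$ should enter only to quantify item~(iii), i.e.\ how far the norm is from being ASQ. Concretely, I would let $|\cdot|$ be the Minkowski gauge of a closed, convex, symmetric, absorbing body $B\supseteq K$ obtained by enlarging $K$ by an amount governed by~$\varepsilon$, the enlargement being chosen so that (a)~every added point violates at least one of the constraints that define the sets $W_{n,\rho}$ of Lemma~\ref{lemma:smallopenweak}, (b)~no slice of $B$ becomes small, and (c)~there is extra room along the ``tree directions'' $x_\alpha$ to accommodate approximate-square witnesses. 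The first, routine, task is to check the sandwich $\lambda_1 B_{c(\mathbb N^{<\omega})}\subseteq B\subseteq \lambda_2 B_{c(\mathbb N^{<\omega})}$ for suitable $0<\lambda_1\le\lambda_2$, so that $|\cdot|$ is an equivalent norm.

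For item~(ii) I would carry the small weakly open sets of Lemma~\ref{lemma:smallopenweak} over to $B$. Each $W_{n,\rho}$ is cut out of $K$ by the weak conditions $e^*_{\emptyset\smallfrown i}(x)>\frac{2}{n}-1-2\rho$ for $1\le i\le n$ and $\lim x<-1+\rho$, and $\diam(W_{n,\rho})\to 0$. Imposing the same conditions on the larger ball $B$ yields a non-empty relatively weakly open set $U_{n,\rho}\subseteq B$; by design~(a), every point of $B\setminus K$ fails one of these conditions, so $U_{n,\rho}$ is still contained in (a small neighbourhood of) $W_{n,\rho}$ and therefore has $|\cdot|$-diameter tending to~$0$ as $n\to\infty$ and $\rho\to0$.

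Item~(i) is the standard tree slice argument. Fix a slice $S(B,f,\beta)$ with $f\in X^*$. Using $c(\mathbb N^{<\omega})^*\cong\ell_1$, the functional $f$ corresponds to an absolutely summable family of coefficients on the nodes together with a multiple of $\lim$; hence the values of $f$ on the tree extreme points of $B$ along a fixed branch cluster as the nodes run deep into the tree. I would then apply Lemma~\ref{lemma:convex-decomposition} to a near-maximiser of $f$ on $B$, splitting it into contributions from $A$, from $-A$ and from the enlargement, and argue that the slice must contain two deep tree extreme points lying on different branches, whose mutual $|\cdot|$-distance can be taken arbitrarily close to~$2$. This forces $\diam(S(B,f,\beta))=2$; guaranteeing design property~(b) (so that isolating an enlargement point cannot create a small slice) is the only extra care needed beyond \cite{bec-lop-rue:15a}.

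The heart of the proof, and the step I expect to be the main obstacle, is item~(iii). I would first use Lemma~\ref{lemma:SQ} to reduce the verification to a dense set $S\subseteq S_{(c_0,|\cdot|)}$ of vectors that are determined, apart from their limiting value, by finitely many tree nodes. Given such $x_1,\dots,x_n$ and $0<r,s<\frac{1-\varepsilon}{1+\varepsilon}$, I would pick a node $\gamma$ deep enough to avoid the finite data of all the $x_i$ and take $y$ to be a normalised vector living on the corresponding deep part of the tree. The difficulty is the estimate $|rx_i\pm sy|\le1$: since $c(\mathbb N^{<\omega})\subseteq\ell_\infty$, the vector $rx_i\pm sy$ has a coordinate of modulus near $r+s$, which exceeds~$1$ once $r+s>1$, so a sup-norm bound cannot work and one must compute the gauge of $B$ itself. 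This forces one to track, simultaneously, how the $\varepsilon$-enlargement in the tree directions lowers the dual norm of the offending coordinate functional and how much of the ``limit budget'' a near-tip $x_i$ (with $|\lim x_i|$ close to~$1$) has already consumed. Making these two estimates balance so as to yield exactly the threshold $\frac{1-\varepsilon}{1+\varepsilon}$ — rather than some unspecified $\delta(\varepsilon)<1$ — while simultaneously respecting design properties~(a) and~(b) is the delicate point on which the whole construction turns.
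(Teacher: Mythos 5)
Your overall architecture (gauge of an $\varepsilon$-calibrated enlargement of $K$, transfer of the sets $W_{n,\rho}$ of Lemma~\ref{lemma:smallopenweak}, reduction of (iii) to a dense set via Lemma~\ref{lemma:SQ}) matches the paper's, and your outlines of (i) and (ii) are workable. But there is a genuine gap exactly where you flag ``the main obstacle'': item (iii) is not proved, and the specific plan you propose for it cannot work. If the witness $y$ is a normalised vector supported on deep nodes of the tree, say $y=e_\gamma$, and $x_i=x_\alpha\in A$, then \emph{every} coordinate of $x_\alpha$ equals $\pm1$, so for one of the two signs $\|rx_\alpha\pm se_\gamma\|_\infty=r+s>1$ no matter how deep $\gamma$ is chosen; as long as $B\subseteq B_{c(\mathbb N^{<\omega})}$ (which any moderate enlargement of $K$ compatible with your design properties (a) and (b) must satisfy), the gauge of that vector exceeds $1$. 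You correctly observe that a sup-norm bound cannot work, but you do not supply the mechanism that replaces it, and there is no node left ``free'' in the tree for a witness, since the elements of $A$ have full support with unimodular coordinates.

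The paper's resolution, which the proposal misses, is to change the ambient space: it works in $Z=c(\mathbb N^{<\omega})\oplus_\infty c_0$ (still isomorphic to $c_0$) with unit ball
$B=\overline{\conv}\bigl((A\times\{0\})\cup(-A\times\{0\})\cup((1-\varepsilon)B_Z+\varepsilon B_{c_0(\mathbb N^{<\omega})}\times\{0\})\bigr)$.
The square witness for (iii) is then the normalisation of $(0,e_n)$ for $n$ beyond the (finite) supports of the second components of $z_1,\dots,z_m$: because the sum is an $\ell_\infty$-sum, $\|z_i\pm(0,e_n)\|_Z\le1$, hence $|z_i\pm(0,e_n)|\le\frac{1}{1-\varepsilon}$; normalising costs a further $\frac{\varepsilon}{1-\varepsilon}$, and \cite[Lemma~6.3]{avi-alt-7} converts the resulting bound $|z_i\pm z|\le\frac{1+\varepsilon}{1-\varepsilon}$ into $(r,s)$-SQ for all $r,s<\frac{1-\varepsilon}{1+\varepsilon}$. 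This is where the exact threshold comes from; it is not obtained by balancing estimates inside the tree. (The term $\varepsilon B_{c_0(\mathbb N^{<\omega})}\times\{0\}$ is what forces $|(e_\alpha,0)|=1$ and hence diameter-$2$ slices in (i), while the tightened constraint $\lim<-1+\varepsilon\eta$ is what confines the weakly open set of (ii) to a $4\eta$-neighbourhood of $\conv(A)\times\{0\}$, rather than every added point individually violating a $W_{n,\rho}$-constraint as your design principle (a) suggests.)
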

\begin{proof}
Let us denote by $\|\cdot\|_Z$ the norm of the Banach space $Z:=c(\mathbb N^{<\omega})\oplus_\infty c_0$. Since $c_0$ and $Z$ are isomorphic, 
it suffices to prove the statement of the theorem for the space~$(Z,\|\cdot\|_Z)$.
Let $c_0(\mathbb N^{<\omega}) \sub c(\mathbb N^{<\omega})$ be the subspace of all $x\in c(\mathbb N^{<\omega})$
such that $\lim x=0$. For each $\alpha\in \mathbb N^{<\omega}$ we write $e_\alpha$ to denote the element of $S_{c_0(\mathbb N^{<\omega})}$
defined by $e_\alpha(\alpha)=1$ and $e_\alpha(\beta)=0$ for every $\beta \in \mathbb N^{<\omega}\setminus \{\alpha\}$.

Let $|\cdot|$ be the Minkowski functional of the closed convex symmetric set
$$
	B:=\overline{\conv}\Big((A\times\{0\})\cup ({-A}\times \{0\})\cup \big((1-\varepsilon)B_Z+\varepsilon B_{c_0(\mathbb N^{<\omega})}\times\{0\}\big)\Big) \sub Z,
$$
that is, $|z|:=\inf\{t>0: z\in tB\}$ for all $z\in Z$. Since $(1-\varepsilon)B_Z\subseteq B\subseteq B_Z$, it follows that $|\cdot|$ is an equivalent norm on~$Z$ with
unit ball $B_{(Z,|\cdot|)}=B$. We have
\begin{equation}\label{equation:equivalence-norms}
	\|z\|_Z \leq |z|\leq \frac{1}{1-\varepsilon}\|z\|_Z\quad\text{for all $z\in Z$}
\end{equation}
and
\begin{equation}\label{equation:coincidence-norms}
	\vert (x,0)\vert=\Vert (x,0)\Vert_Z=\Vert x\Vert_\infty
	\quad
	\mbox{for all }x\in c_0(\mathbb N^{<\omega}), 
\end{equation}
because $B_{c_0(\mathbb N^{<\omega})}\times \{0\}\subseteq B$. We denote by $\|\cdot\|_{Z^*}$ and $|\cdot|_{Z^*}$
the equivalent norms on~$Z^*$ induced by $\|\cdot\|_Z$ and $|\cdot|$, respectively. We will check that
$|\cdot|$ satisfies the required properties.

\smallskip
{\em Proof of~(i).} Let $S \sub B$ be a slice of~$B$. Since $B \setminus S$ is convex and closed, we have
$$
	S\cap \Big((A\times\{0\})\cup ({-A}\times \{0\})\cup \big((1-\varepsilon)B_Z+\varepsilon B_{c_0(\mathbb N^{<\omega})}\times\{0\}\big)\Big)\neq\emptyset.
$$
We now distinguish several cases.

Case (a): $S\cap (A\times \{0\})\neq \emptyset$. Then $(x_\alpha,0)\in S$ for some $\alpha\in \mathbb N^{<\omega}$. 
Observe that the sequence $((x_{\alpha\smallfrown n},0))_{n\in \N}=((x_\alpha+2e_{\alpha\smallfrown n},0))_{n\in \N}$ converges weakly to $(x_\alpha,0)$ in~$Z$
(because $(e_{\alpha\smallfrown n})_{n\in \N}$ is weakly null in~$c_0(\mathbb N^{<\omega})$).
Since $S$ is relatively weakly open in~$B$ and $(x_{\alpha\smallfrown n},0)\in B$ for every $n\in\mathbb N$, we have 
$(x_{\alpha\smallfrown n_0},0)\in S$ for large enough $n_0\in \N$. Hence,
$$
	\diam_{|\cdot|}(S)\geq \vert (x_{\alpha\smallfrown n_0},0)-(x_{\alpha},0)\vert=2\vert (e_{\alpha\smallfrown n_0},0)\vert
	\stackrel{\stackrel{\eqref{equation:coincidence-norms}}{}}{=}2\Vert (e_{\alpha\smallfrown n_0},0)\Vert_Z=2
$$
and, therefore, $\diam_{|\cdot|}(S)=2$.

Case (b): $S\cap (-A\times \{0\})\neq \emptyset$. The proof that $\diam_{|\cdot|}(S)=2$ runs similarly as in~(a).

Case (c): $S\cap ((1-\varepsilon)B_Z+\varepsilon B_{c_0(\mathbb N^{<\omega})}\times \{0\})\neq \emptyset$. Then we can pick
$(x,y)\in B_Z$ and $x'\in B_{c_0(\mathbb N^{<\omega})}$ in such a way that 
$$
	z:=(1-\varepsilon)(x,y)+\varepsilon(x',0) \in S.
$$ 
We can assume without loss of generality that $x'$ has finite support, because the set of all finitely supported functions from $\mathbb N^{<\omega}$ to~$[-1,1]$ is 
a norm dense subset of~$B_{c_0(\mathbb N^{<\omega})}$. Choose $\alpha\in \mathbb N^{<\omega}$ such that $x'(\alpha\smallfrown n)=0$ 
for every $n\in\mathbb N$. Observe that
$$
	x-x(\alpha\smallfrown n)e_{\alpha\smallfrown n}\pm e_{\alpha\smallfrown n}\in B_{c(\mathbb N^{<\omega})}
	\quad
	\mbox{and}
	\quad
	x'\pm e_{\alpha\smallfrown n}\in B_{c_0(\mathbb N^{<\omega})}
$$
and so 
\begin{multline*}
	z_n^{\pm}:=(1-\varepsilon)\big(x-x(\alpha\smallfrown n)e_{\alpha\smallfrown n},y\big)+\varepsilon (x',0)\pm (e_{\alpha\smallfrown n},0) \\
	= (1-\varepsilon)\big(x-x(\alpha\smallfrown n)e_{\alpha\smallfrown n}\pm e_{\alpha\smallfrown n},y\big)+
	\varepsilon (x'\pm e_{\alpha\smallfrown n},0)\in B
\end{multline*}
for every $n\in\mathbb N$. Since $S$ is relatively weakly open in~$B$ and both sequences $(z_n^{+})_{n\in \N}$ and $(z_n^{-})_{n\in \N}$ converge weakly to~$z$ in~$Z$,
we can find $n_0\in\mathbb N$ large enough so that both $z_{n_0}^{+}$ and $z_{n_0}^{-}$ belong to~$S$. Hence,
$$
	\diam_{|\cdot|}(S)\geq |z_{n_0}^{+} - z_{n_0}^{-}| =2\vert (e_{\alpha\smallfrown n_0},0)\vert \stackrel{\stackrel{\eqref{equation:coincidence-norms}}{}}{=}  
	2\Vert (e_{\alpha\smallfrown n_0},0)\Vert_Z=2
$$
and so $\diam_{|\cdot|}(S)=2$. This finishes the proof of~(i).

\smallskip
{\em Proof of~(ii).} Fix $\theta>0$. By Lemma~\ref{lemma:smallopenweak}, we can take $n\in \N$ and $\rho>0$ such that
\begin{equation}\label{equation:choice-n-rho}
	\diam(W_{n,\rho})\leq \frac{(1-\varepsilon)\theta}{2}
	\quad\mbox{and}\quad
	 \eta:=\frac{2\rho}{5} \leq \frac{\theta}{16}.
\end{equation}
Define 
\begin{multline*}
	U:=\Big\{z\in B:  \, (e_{\emptyset\smallfrown i}^*,0)(z)>\frac{2}{n}-1-\eta \, \text{ for all }i\in \{1,\dots,n\}  \\
	 \text{and } (\lim,0)(z)<-1+\varepsilon\eta\Big\}.
\end{multline*}
It is clear that $U$ is a relatively weakly open subset of~$B$. To prove that $U\neq \emptyset$ we will check that 
the vector $z_0:=(\frac{1}{n}\sum_{j=1}^n x_{\emptyset\smallfrown j},0)\in B$ belongs to~$U$. Indeed,
for each $i\in \{1,\dots,n\}$ we have
$$
	(e_{\emptyset\smallfrown i}^*,0)(z_0)=\frac{1}{n}
	\sum_{j=1}^n e_{\emptyset\smallfrown i}^*(x_{\emptyset\smallfrown j})=\frac{1}{n}\big(1-(n-1)\big)=\frac{2}{n}-1>\frac{2}{n}-1-\eta
$$ 
and we also have
$$
	(\lim,0)(z_0)=\frac{1}{n}\sum_{j=1}^n \lim x_{\emptyset\smallfrown j}=-1<-1+\varepsilon\eta.
$$
Hence $z_0\in U$ and so $U\neq \emptyset$.

We will show that $\diam_{|\cdot|}(U)\leq \theta$. The key point is the following:

{\em Claim. For every 
$$
	z\in V:=U \cap {\rm conv}\Big((A\times\{0\})\cup ({-A}\times \{0\})\cup \big((1-\varepsilon)B_Z+\varepsilon B_{c_0(\mathbb N^{<\omega})}\times\{0\}\big)\Big)
$$
there is $z'\in W_{n,\rho}\times \{0\}$ such that $|z-z'|<4\eta$.} 

Indeed, by Lemma~\ref{lemma:convex-decomposition}
we can write 
\begin{equation}\label{equation:z}
	z=a z_1-b z_2+c((1-\varepsilon)u+\varepsilon v)
\end{equation}
for some $a,b,c\geq 0$ with $a+b+c=1$ and
$$
	z_1,z_2\in {\rm conv}(A\times \{0\}), \
	u\in B_{Z}\ \mbox{and} \
	v\in B_{c_0(\mathbb N^{<\omega})}\times\{0\}.
$$
Observe that 
$$
	(\lim,0)(z_1)=(\lim,0)(z_2)=-1,
$$ 
because $\lim x_\alpha=-1$ for all $\alpha \in \mathbb N^{<\omega}$. We also have
$$
	\vert(\lim,0)(u)\vert\leq \|(\lim,0)\|_{Z^*} \|u\|_Z \leq 1
	\quad\mbox{and}\quad 
	(\lim,0)(v)=0.
$$ 
Thus
\begin{multline*}
	-1+\varepsilon\eta> (\lim,0)(z) \stackrel{\stackrel{\eqref{equation:z}}{}}{=} -a+b+c(1-\varepsilon)(\lim ,0)(u)  \\ \geq -a+b-c(1-\varepsilon) =-a-b-c+2b+\varepsilon c=
	-1+2b+\varepsilon c,
\end{multline*}
and so $2b+\varepsilon c<\varepsilon\eta$. This inequality implies that $b< \eta$ (bear in mind that $\epsilon<1$) and that $c<\eta$. 
Consequently
\begin{eqnarray*}
	\vert z-z_1\vert &\stackrel{\stackrel{\eqref{equation:z}}{}}{=}& \Big| (a-1) z_1-b z_2+c((1-\varepsilon)u+\varepsilon v) \Big| \\
	& = & \Big| -b(z_1+z_2)+c((1-\varepsilon)u+\varepsilon v-z_1) \Big| \\
	& \leq & b |z_1|+b|z_2|+c\big|(1-\varepsilon)u+\varepsilon v\big|+c|z_1| \\
	& \stackrel{(\star)}{\leq} & 2b+2c<4\eta,
\end{eqnarray*}
where inequality $(\star)$ follows from the fact that $z_1$, $z_2$ and $(1-\varepsilon)u+\varepsilon v$ belong to~$B=B_{(Z,|\cdot|)}$. 
Hence, $|z-z_1|<4\eta$. 

We can write $z_1=(x,0)$ for some $x\in {\rm conv}(A)$. Then $\lim x=-1$ and for each $i\in \{1,\dots,n\}$ we have
$$
	e_{\emptyset\smallfrown i}^*(x)=
	(e_{\emptyset\smallfrown i}^*,0)(z_1)\geq (e_{\emptyset\smallfrown i}^*,0)(z)-\vert (e_{\emptyset\smallfrown i}^*,0)\vert_{Z^*} \vert z-z_1\vert
	>
	\frac{2}{n}-1-5\eta,
$$
because $z\in U$ and $\vert (e_{\emptyset\smallfrown i}^*,0)\vert_{Z^*} \leq \|(e_{\emptyset\smallfrown i}^*,0)\|_{Z^*}=1$
(by~\eqref{equation:equivalence-norms}). This implies, with the notation of Lemma~\ref{lemma:smallopenweak}, that 
$x\in W_{n,\rho}$ (recall that $\eta=\frac{2}{5}\rho$). 
Therefore, the conclusion
of the {\em Claim} holds taking $z'=z_1$.

Finally, let $w_1,w_2\in U$ and fix $s>0$. Since $U$ is relatively open in~$B$, we
can find $v_1,v_2\in V$ such that $|w_1-v_1|\leq s$ and $|w_2-v_2|\leq s$. By the {\em Claim} above,
there exist $v'_1,v'_2\in W_{n,\rho}\times \{0\}$ such that $|v_1-v'_1|<4\eta$ and $|v_2-v'_2|<4\eta$. Then 
$$
	|v'_1-v'_2| \stackrel{\stackrel{\eqref{equation:equivalence-norms}}{}}{\leq}
	\frac{1}{1-\varepsilon}\|v'_1-v'_2\|_Z \stackrel{\stackrel{\eqref{equation:coincidence-norms}}{}}{\leq} 
	\frac{1}{1-\varepsilon}\diam(W_{n,\rho}) \stackrel{\stackrel{\eqref{equation:choice-n-rho}}{}}{\leq} \frac{\theta}{2}
$$
and so
$$
	|w_1-w_2| < 2s+8\eta + \frac{\theta}{2}\stackrel{\stackrel{\eqref{equation:choice-n-rho}}{}}{\leq} 2s+\theta.
$$
As $w_1,w_2\in U$ and $s>0$ are arbitrary, we conclude that $\diam_{|\cdot|}(U)\leq \theta$.

\smallskip
{\em Proof of~(iii).} We will show that $(Z,|\cdot|)$ is $(r,s)$-SQ for any $0<r,s < \frac{1-\varepsilon}{1+\varepsilon}$
with the help of Lemma~\ref{lemma:SQ}. Let $H$ be the norm dense subset of $B_{c_{0}}$
consisting of all finitely supported functions from $\N$ to~$[-1,1]$. Then the set
$$
	S:=S_{(Z,|\cdot|)} \cap {\rm conv}\Big((A\times\{0\})\cup ({-A}\times \{0\})\cup \big((1-\varepsilon)(B_{c(\N^{\omega})} \times H)+
	\varepsilon B_{c_0(\N^{\omega})} \times\{0\}\big)\Big)
$$ 
is norm dense in $S_{(Z,|\cdot|)}$. Fix $0<r,s < \frac{1-\varepsilon}{1+\varepsilon}$ and 
take finitely many $z_1,\ldots, z_m\in S$. By Lemma~\ref{lemma:convex-decomposition}, each $z_i$ can be written as
$$
	z_i=a_i (x^1_i,0)+b_i (-x^2_i,0)+c_i\big((1-\varepsilon)(x_i,y_i)+\varepsilon (x^3_i,0)\big)
$$ 
for some $a_i,b_i,c_i\geq 0$ with $a_i+b_i+c_i=1$ and
$$
	x^1_i,x^2_i\in {\rm conv}(A), \
	x_i \in B_{c(\N^{\omega})},\ y_i\in H \ \mbox{and} \
	x_i^3 \in B_{c_0(\mathbb N^{<\omega})}.
$$
Let $(e_n)_{n\in \N}$ be the usual basis of~$c_0$ and choose $n\in\mathbb N$ large enough 
such that $\Vert y_i\pm e_n\Vert_{\infty}\leq 1$ for all $i\in \{1,\dots,m\}$. 

Observe that for each $i\in \{1,\dots,m\}$ we have
$$
	z_i\pm (0,e_n)=a_i (x^1_i,\pm e_n)+b_i (-x^2_i, \pm e_n)+c_i\big((1-\varepsilon)(x_i,y_i\pm e_n)+\varepsilon (x^3_i,\pm e_n)\big),
$$
thus
\begin{multline*}
	\| z_i\pm (0,e_n)\|_Z \\ \leq a_i\|(x^1_i,\pm e_n)\|_Z+b_i \|(x^2_i,\pm e_n)\|_Z+c_i\big((1-\varepsilon)\|(x_i,y_i\pm e_n)\|_Z+\varepsilon \|(x^3_i,\pm e_n)\|_Z\big)
 	\\ \leq a_i+b_i+c_i((1-\varepsilon)+\varepsilon)) = 1,
\end{multline*}
which combined with~\eqref{equation:equivalence-norms} yields
\begin{equation}\label{equation:not-normalized}
	\vert z_i\pm (0,e_n)\vert\leq \frac{1}{1-\varepsilon}.
\end{equation}
Another appeal to~\eqref{equation:equivalence-norms} gives $1 =\|(0,e_n)\|_Z\leq |(0,e_n)|\leq \frac{1}{1-\varepsilon}$ and, therefore, 
the vector $z:=\frac{1}{|(0,e_n)|}(0,e_n)\in S_{(Z,|\cdot|)}$ satisfies
$$
	|(0,e_n)-z|=\left|\left(1-\frac{1}{|(0,e_n)|}\right)(0,e_n) \right|=
	|(0,e_n)|-1 \leq \frac{1}{1-\varepsilon}-1 =\frac{\varepsilon}{1-\varepsilon}.
$$
This inequality and~\eqref{equation:not-normalized} give
$$
	\vert z_i\pm z\vert \leq \frac{1}{1-\varepsilon}+\frac{\varepsilon}{1-\varepsilon}=\frac{1+\varepsilon}{1-\varepsilon}
	\quad\text{for every $i\in \{1,\dots,m\}$}.
$$
Since $r,s \leq  \frac{1-\varepsilon}{1+\varepsilon}$, we can apply \cite[Lemma 6.3]{avi-alt-7} to conclude that
$$
	\vert rz_i\pm s z\vert \leq 1 \quad\text{for every $i\in \{1,\dots,m\}$}.
$$
From Lemma~\ref{lemma:SQ} it follows that $(Z,|\cdot|)$ is $(r,s)$-SQ for all $0<r,s<\frac{1-\varepsilon}{1+\varepsilon}$.
The proof is finished.
\end{proof}

We can now prove Theorem~\ref{theorem:countere} in full generality.

\begin{proof}[Proof of Theorem \ref{theorem:countere}]
Let $Z$ and $W$ be subspaces of~$X$ such that $Z$ is isomorphic to~$c_0$ and $X=Z\oplus W$.
Fix $0<\varepsilon<1$ and let $|\cdot|_Z$ be an equivalent norm on~$Z$ like in Theorem~\ref{theorem:countpartcas}. 
Since any Banach space admits an equivalent norm for which the unit ball has slices of arbitrarily small diameter (see, e.g., \cite[Lemma~2.1]{bec-lop-rue:16b}),
we can take an equivalent norm $|\cdot|_W$ on~$W$ satisfying that property. 
Let $|\cdot|$ be the equivalent norm on~$X$ defined by $|z+w|:=\max\{|z|_Z,|w|_W\}$ for every $z\in Z$ and for every $w\in W$.
We claim that $(X,|\cdot|)$ satisfies all the requirements. 

(i) The $\ell_\infty$-sum of two Banach spaces has the slice-D2P whenever one of the factors has the slice-D2P (see, e.g., \cite[Theorem~2.29]{lan-thesis}).
Since $(Z,|\cdot|_Z)$ has the slice-D2P, we conclude that the same holds for $(X,|\cdot|)$.

(ii) Let us prove that $B_{(X,|\cdot|)}$ contains non-empty relatively weakly open subsets of arbitrarily small diameter. Fix $\eta>0$. Then there exists a non-empty 
relatively weakly open set $U \sub B_{(Z,|\cdot|_Z)}$ with $\diam_{|\cdot|_Z}(U)<\eta$. Now, take a slice $S$ of $B_{(W,|\cdot|_W)}$ 
with $\diam_{|\cdot|_W}(S)<\eta$. Since the map $\varphi:B_{(Z,|\cdot|_Z)}\times B_{(W,|\cdot|_W)} \to B_{(X,|\cdot|)}$ given by
$$
	\varphi(z,w):=z+w 
	\quad\text{for all $(z,w)\in B_{(Z,|\cdot|_Z)}\times B_{(W,|\cdot|_W)}$}
$$
is a homeomorphism when each of the balls is equipped with the restriction of the weak topology, it follows
that $V:=\varphi(U\times S)$ is a relatively weakly open subset of $B_{(X,|\cdot|)}$. Clearly, $V\neq \emptyset$
and $\diam_{|\cdot|}(V)<\eta$, as desired.

(iii) The space $(X,|\cdot|)$ is $(r,s)$-SQ for arbitrary $0<r,s < \frac{1-\varepsilon}{1+\varepsilon}$, because so is $(Z,|\cdot|_Z)$ and 
the $\ell_\infty$-sum of two Banach spaces is $(r,s)$-SQ whenever one of the factors is $(r,s)$-SQ (see \cite[Proposition~6.6]{avi-alt-7}).
\end{proof}

\subsection*{Acknowledgements} 
We thank Antonio Avil\'{e}s for valuable discussions on the results of this paper.
The research was supported by grants PID2021-122126NB-C32 (J. Rodr\'{i}guez) and 
PID2021-122126NB-C31 (A. Rueda Zoca), funded by \break MCIN/AEI/10.13039/501100011033 and ``ERDF A way of making Europe'', 
and also by grant 21955/PI/22 (funded by Fundaci\'on S\'eneca - ACyT Regi\'{o}n de Murcia). 
The research of A. Rueda Zoca was also supported by grants FQM-0185 and PY20\_00255 (funded by Junta de Andaluc\'ia).


\bibliographystyle{amsplain}

\end{document}